\def\arXiv#1{arXiv:\href{http://arXiv.org/abs/#1}{#1}}
\def\MR#1{MR\href{http://www.ams.org/mathscinet-getitem?mr=#1}{#1}}
\def\C{\mathbb C}
\def\dim{\operatorname{dim}}
\def\ext{\operatorname{ext}}
\def\F{\mathbb F}
\def\FP{\mathbb {FP}}
\def\N{\mathbb N}
\def\R{\mathbb R}
\def\S{\mathbb S}
\def\sp{\operatorname{span}}
\def\supp{\operatorname{supp}}
\def\theta{\vartheta}
\newtheorem{theorem}{Theorem}[section]
\newtheorem{lemma}[theorem]{Lemma}
\newtheorem{proposition}[theorem]{Proposition}
\newtheorem{corollary}[theorem]{Corollary}
\newtheorem{definition}[theorem]{Definition}
\theoremstyle{definition}
\newtheorem{conjecture}[theorem]{Conjecture}
\numberwithin{equation}{section}
\title{Energy on spheres and discreteness of minimizing measures}
\date{\today}
\author{Dmitriy Bilyk}
\author{Alexey Glazyrin}
\author{Ryan Matzke}
\author{Josiah Park}
\author{Oleksandr Vlasiuk}
\address{School of Mathematics, University of Minnesota, Minneapolis, MN 55455} 
\email{dbilyk@math.umn.edu}
\address{School of Mathematical \& Statistical Sciences, The University of Texas Rio Grande Valley, Brownsville, TX 78520}
\email{alexey.glazyrin@utrgv.edu}
\address{School of Mathematics, University of Minnesota, Minneapolis, MN 55455} 
\email{matzk053@umn.edu}
\address{School of Mathematics, Georgia Institute of Technology, Atlanta, GA 30332}
\email{j.park@gatech.edu}
\address{Department of Mathematics, Florida State University, Tallahassee, FL 32306}
\email{ovlasiuk@fsu.edu}
\thanks{}
\subjclass[2010]{Primary 52A40, 31E05; Secondary 58C35, 90C26}
\keywords{Potential energy minimization, spherical codes, spherical designs, attractive-repulsive}
\begin{document} 

\maketitle



\section{Introduction}\label{sec:Intro}

Energy minimization on the sphere arises naturally in numerous contexts in mathematical physics, discrete and metric geometry, coding theory, signal processing, and  other fields of mathematics.  Many problems can be reformulated in terms of  minimization of the {\em{discrete energy}}
\begin{equation}\label{eq:poten}
   E_f (\mathcal C) = \frac{1}{|\mathcal C|^2}  \sum\limits_{x, y\in\mathcal{C}} f( \langle x, y \rangle)
\end{equation} 
over all $N$-point configurations $\mathcal C \subset \S^{d-1}$, or of the continuous {\em{energy integral}} 
\begin{equation}\label{eq:muener}
    I_{f}(\mu)=\int_{\S^{d-1}}\int_{\S^{d-1}} f (\langle x,y \rangle )  d\mu(x)d\mu(y)
\end{equation} 
over $\mu \in \mathcal P (\S^{d-1})$, the set of all Borel probability measures. In this work, we mostly concentrate on the energy integrals.  We assume that the measurable function $f: [-1,1]\rightarrow \R$ is bounded below, hence the integral \eqref{eq:muener} is well defined, although it may be infinite for some measures. 

Loosely speaking, minimizing the discrete $N$-point energy may be interpreted as finding the equilibrium position of $N$ ``particles'' on the sphere, which interact  according to the potential $f$, which depends on the distance between $x$ and $y$, while minimizing  the energy integral corresponds to finding the optimal distribution of unit charge on $\S^{d-1}$ under the same interaction. Minimization of energy integrals dictates the limiting behavior of the discrete problem as the number of points  $N$ goes to infinity. Observe that the interaction depends only on the distance between $x$ and $y$, hence the energy \eqref{eq:muener} is invariant under orthogonal transformations.

The definitions of the discrete  \eqref{eq:poten} and continuous \eqref{eq:muener} energies are compatible in the sense that 
\begin{equation}\label{eq:trans}
E_f (\mathcal C) =  I_f \big(\mu_{\mathcal C}   \big), \,\,\,\textup{ where }\,\,\, \mu_{\mathcal C} = \frac{1}{|\mathcal C|}  \sum_{x\in \mathcal C } \delta_x,
\end{equation}
and we shall often abuse the terminology by saying that $\mathcal C$ (instead of $\mu_{\mathcal C}$) minimizes $I_f$.

In some  models, energy minimization leads to a clustering effect, in the sense that the resulting optimal measures tend to be discrete or at least supported on lower dimensional submanifolds. This phenomenon has been repeatedly observed for energies on $\mathbb R^d$ with {\em attractive-repulsive} potentials, which naturally appear in models in computational chemistry, mathematical biology, and social sciences \cite{balague2013nonlocal,carrillo2017geometry,carrillo2003kinetic,kangkim,kolokolnikov2011stability,mogilner2003mutual,von2012predicting,wu2015nonlocal}. 

In many instances, it is observed that in the Euclidean setting the above energies are minimized by measures supported on a sphere of some radius. Our results have some implications in this direction, but we concentrate primarily  on {\em{attractive-repulsive}} potentials on the sphere, i.e. functions $f( \langle x, y \rangle)$ which are increasing near $1$, but decreasing near $-1$; in other words,   two particles  $x$ and $y$ experience repulsion when $x$ and $y$ are close, but attract when they are far apart. In some  examples, potentials of the energy are also   {\em{symmetric}} and {\em{orthogonalizing}}, i.e. they satisfy $ f(t) = f(|t|)$, and $\min \{ f(t):\, t\in [-1,1] \} = f(0)$, which  results in two particles achieving equilibrium when they are in an orthogonal position.

One of the most interesting energies of this type is the {\em{$p$-frame energy}} corresponding to $f (t) = |t|^p$, where $p>0$, 
\begin{equation}\label{eq:pframe}
    I_{f}(\mu)=\int_{\S^{d-1}}\int_{\S^{d-1}} |\langle x,y \rangle|^{p} d\mu(x)d\mu(y).
\end{equation} 
The behavior of minimizing measures of this energy exhibits peculiar phase transitions at  even integer values of $p$. Whenever $p\in 2 \N$, the $p$-frame energy is minimized by the normalized surface measure $\sigma$ \cite{sidel1974new,Ehler2012}, among other measures.  However, for $p\not\in 2\N$,  all the minimizers appear to be discrete \cite{BGMPV}.

For $p=2$, this energy and  its discrete counterpart, often referred to simply as the {\em{frame potential}},  have been studied in \cite{sidel1974new} and later again in \cite{benedetto2003finite}. In the latter paper, which coined the name for this energy, it was proved that the minimizers of the discrete energies  with $N\ge d$ points  are precisely  unit norm {\em{tight frames}}. 
A tight frame  is a set of vectors $\{\varphi_{i}\}_{i=1}^N\subset \R^{d}$ such that a Parseval type identity, $$\sum\limits_{i=1}^N |\langle x,\varphi_{i} \rangle|^2=A \|x\|^2,$$ holds for all $x\in\R^d$ and for some constant $A>0$. In other words, tight frames   act as overcomplete orthonormal bases and thus play an important role in several areas of applied mathematics. 
 It is easy to see that tight frames, and more generally, isotropic measures on the sphere also minimize the continuous frame energy over all probability measures. 
 
 In the case $p=4$, the $p$-frame energy is closely connected to the maximal equiangular tight frames, which in the complex case are  known as {\em symmetric informationally complete positive operator-valued measures} (SIC-POVMs). 
These are unit norm tight frames $\{ \varphi_i\}_{i=1}^N $ with the property that $|\langle \varphi_i, \varphi_j \rangle |^2 = \frac{1}{d+2}$ or $\frac{1}{d+1}$ for $i\neq j$, in the real and complex case respectively.  In $\C^d$, Zauner's conjecture \cite{zau99} states that SIC-POVMs exist in all dimensions $d\ge 2$, which is supported by extensive numerical evidence \cite{SG,RBSC}. In the real case, the existence of analogous objects is also mysterious: they may exist only in dimensions $d=(2m-1)^2-2$ \cite{LS}, but do not exist for $d=47$ \cite{makhnev}. When these objects do exist, they minimize the $4$-frame energy (with the complex unit sphere replacing $\S^{d-1}$ in the case of SIC-POVMs).

 More generally, when $p = 2k \in 2 \mathbb N$, the function $f(t) = |t|^p = t^{2k}$ is a polynomial, hence any {\em{spherical $2k$-design}} yields the same value of the $p$-frame energy as $\sigma$, and thus is also a minimizer.  More precisely,  discrete equal-weight minimizers are exactly  {\em{projective $k$-designs}}.
 A spherical $t$-design is a set $\{ x_i\}_{i=1}^N \subset \S^{d-1}$   such that $$ \frac{1}{N} \sum_{i=1}^N P(x_i)  = \int_{\S^{d-1}} P(x) d\sigma  (x) $$ for any polynomial $P$ on $\R^d$ of degree up to $t$; see e.g. \cite{delsarteSpherical1977}, while a  projective $k$-design is a configuration such that the above identity holds  for all polynomials of degree up to $2k$, which contain only even-degree terms. To summarize, the $p$-frame energy has  a multitude of minimizers, both continuous and discrete,  when $p$ is an even integer.  
 
 When $p\not\in 2\N$, the situation is much less studied. In our recent paper \cite{BGMPV}, we have shown that, when certain highly symmetrical configurations exist, they minimize the $p$-frame energy on a range of values of $p$ between two consecutive even integers. These configurations, known as {\em{tight designs}},  are designs of high order with few distinct pairwise distances, or equivalently, designs of smallest possible cardinality \cite{delsarteSpherical1977}.  
 Theorem 1.1 in \cite{BGMPV} states that a tight spherical $(2k+1)$-design, whenever it exists,  minimizes the $p$-frame energy for $p\in [2k-2,2k]$, and, moreover,  {\em{every}} minimizer for $p\in (2k-2,2k)$  has to be a tight design (in particular, it has to be discrete). In addition, we have accumulated  a great deal   of numerical evidence that  suggests discreteness of minimizers, leading us to the following conjecture:

 \begin{conjecture}\label{conj1} Let  $p>0$ and $p\not\in 2\N$. Then every minimizer of the $p$-frame energy \eqref{eq:pframe} is a finite discrete measure on $\S^{d-1}$. 
 \end{conjecture}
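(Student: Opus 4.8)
Since $f(t)=|t|^{p}$ is continuous on $[-1,1]$, the energy $I_{f}$ is weak-$*$ continuous on the compact set $\mathcal{P}(\S^{d-1})$, so a minimizer $\mu$ exists; since $f$ is even, replacing $\mu$ by $\tfrac12\big(\mu+\mu\circ(-\mathrm{id})\big)$ preserves $I_{f}$ and does not shrink the support, so we may assume $\mu$ is centrally symmetric. Writing $U_{\mu}(x)=\int_{\S^{d-1}}|\langle x,y\rangle|^{p}\,d\mu(y)$, the first-order variational conditions give $U_{\mu}\ge I_{f}(\mu)$ on $\S^{d-1}$ with equality on $\supp\mu$, while the second-order conditions give $I_{f}(\nu)\ge 0$ for every finite signed measure $\nu$ with $\nu(\S^{d-1})=0$ and $\supp\nu\subseteq\supp\mu$. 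Equivalently, the self-adjoint operator $T_{\mu}h(x)=\int_{\S^{d-1}}|\langle x,y\rangle|^{p}h(y)\,d\mu(y)$ on $L^{2}(\mu)$ is positive semidefinite on the orthogonal complement of the constants. The plan is to convert this one spectral constraint into the geometric statement that $\supp\mu$ is finite.

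The analytic input is the Gegenbauer expansion $|t|^{p}=\sum_{j\ge 0}\alpha_{j}(p)\,C_{j}^{\lambda}(t)/C_{j}^{\lambda}(1)$, $\lambda=\tfrac{d-2}{2}$, in which $\alpha_{j}(p)=0$ for odd $j$ and, for even $j$, $\alpha_{j}(p)$ is a ratio of Gamma functions carrying a factor $1/\Gamma\!\big(1+\tfrac{p-j}{2}\big)$ in the denominator. For $p\notin 2\N$ this is nonzero for every even $j$, and its sign alternates with $j$ once $j>p$, so infinitely many of the $\alpha_{j}(p)$ are strictly negative. For $\sigma$ the operator $T_{\sigma}$ has eigenvalues $\alpha_{j}(p)$ with multiplicity $\dim\mathcal{H}_{j}$, hence an infinite-dimensional negative part, which is why $\sigma$ is not a minimizer; the point of the conjecture is that a measure with infinite support should be excluded for the same reason, namely that its support ``sees'' at least two independent negative directions of the kernel.

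Concretely: assuming $\supp\mu$ is infinite, the aim is to exhibit a two-dimensional subspace $V\subseteq L^{2}(\mu)$ on which $h\mapsto I_{f}(h\,d\mu)=\langle T_{\mu}h,h\rangle_{L^{2}(\mu)}$ is negative definite. Since $\dim V=2$ forces $V\cap\{h:\int h\,d\mu=0\}\ne\{0\}$, this contradicts the second-order condition, so $\supp\mu$ is finite --- which is exactly discreteness. To build $V$ one would fix even degrees $j_{1}<j_{2}$ with $\alpha_{j_{1}}(p),\alpha_{j_{2}}(p)<0$, pick an accumulation point $x_{0}\in\supp\mu$, and construct $h_{1},h_{2}\in L^{2}(\mu)$ localized near $x_{0}$ whose self- and cross-energies under $T_{\mu}$ are dominated by the degree-$j_{1}$ and degree-$j_{2}$ contributions of the kernel. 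A degenerate case is benign: if $\supp\mu$ lies on a proper great subsphere $\S^{k-1}$, then since $\mathcal{P}(\S^{k-1})\subset\mathcal{P}(\S^{d-1})$ and the kernel is unchanged, $\mu$ already minimizes on $\S^{k-1}$, and one inducts on the dimension (base case $\S^{0}$). Once the support is finite, the cardinality can moreover be bounded: a linear-programming lower bound $|t|^{p}\ge L(t)$ by a polynomial $L$ whose even positive-degree Gegenbauer coefficients up to $\deg L$ are strictly positive and for which $|t|^{p}-L$ has only finitely many zeros forces $\supp\mu$, when the bound is sharp at the minimizer, to be an $s$-distance set obeying the Delsarte--Goethals--Seidel absolute bound; this is the route \cite{BGMPV} executes sharply in the tight-design ranges of $p$.

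The hard part is the construction of $V$: one needs a \emph{quantitative} lower bound on the negative part of $T_{\mu}$ from the merely qualitative hypothesis that $\supp\mu$ is infinite. Because the coefficients $\alpha_{j}(p)$ alternate in sign, the negative contributions of the kernel must be separated from the positive ones, and a priori a measure with infinite but widely spread support could distribute its harmonic mass so as to cancel the negative directions and leave $T_{\mu}$ positive on $\{1\}^{\perp}$. Ruling this out seems to demand either a genuine uncertainty principle --- every probability measure on $\S^{d-1}$ with infinite support has controlled harmonic content in at least two negative-coefficient degrees --- or a dimension reduction sending a minimizer on $\S^{d-1}$ with positive-dimensional support to a non-discrete minimizer of a related $p$-frame problem in lower dimension, which would permit an induction on $d$ anchored by the tight-design results of \cite{BGMPV}. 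Supplying this ingredient, uniformly in $p$ and $d$, is the real content of the conjecture.
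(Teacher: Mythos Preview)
The statement you are attempting to prove is \emph{Conjecture}~\ref{conj1}; the paper does not claim a proof of it. The authors say explicitly that ``the discreteness of the minimizers claimed in Conjecture~\ref{conj1} remains out of reach,'' and instead prove the weaker Theorem~\ref{t:pframe}: every minimizer has support with empty interior. So there is no ``paper's own proof'' to compare against; your proposal is an outline toward an open problem.

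Your outline is honest about where the difficulty lies, and indeed the gap you flag at the end is real and fatal to the argument as stated. The reduction is sound: if one could exhibit, for any minimizer $\mu$ with infinite support, a two-dimensional subspace $V\subset L^2(\mu)$ on which the quadratic form $h\mapsto\langle T_\mu h,h\rangle$ is negative definite, then intersecting with the hyperplane $\{h:\int h\,d\mu=0\}$ yields a nonzero mean-zero $h$ with $I_f(h\,d\mu)<0$, contradicting the second-order optimality (equivalently, positive definiteness of $f$ on $\supp\mu$, Lemma~\ref{lem:positive_semidef}). But the construction of $V$ is exactly the conjecture. Knowing that infinitely many Gegenbauer coefficients $\alpha_j(p)$ are negative tells you $T_\sigma$ has infinite-dimensional negative part; it tells you nothing directly about $T_\mu$ for a general $\mu$, because the spherical-harmonic decomposition is adapted to $\sigma$, not to $\mu$. ``Localizing near an accumulation point so that the degree-$j_1$ and degree-$j_2$ contributions dominate'' is a hope, not a mechanism: localization in space does not isolate individual harmonic degrees, and the positive coefficients (all even $j\le p$) compete with the negative ones at every scale. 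This is precisely the obstruction the paper faces. Their Proposition~\ref{prop:I} does carry out a version of your idea in a very special geometry --- points equispaced on a short geodesic arc together with one orthogonal point --- and even there the computation is delicate, exploiting the singularity of $|t|^p$ at $t=0$ to beat the smooth positive part; it does not extend to arbitrary accumulation points of $\supp\mu$.

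A smaller technical point: your second-order condition ``$I_f(\nu)\ge 0$ for every finite signed $\nu$ with $\nu(\S^{d-1})=0$ and $\supp\nu\subseteq\supp\mu$'' is stronger than what minimality immediately gives. The perturbation $\mu+t\nu$ must remain a nonnegative measure for small $t$, which requires $\nu\ll\mu$ with bounded density, not merely $\supp\nu\subseteq\supp\mu$. The paper's Lemma~\ref{lem:positive_semidef} (positive definiteness on finite subsets of $\supp\mu$) is the correct formulation and is what your $V$-argument would actually need, so this does not break the strategy, but the statement as written needs care.
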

 
 There are other  conjectures  in the literature also asserting the discreteness of measures minimizing certain energies on the sphere. We mention a couple of examples.
 
Let $f( t ) = \arccos |t|$, i.e. $f( \langle x, y \rangle)$ represents the non-obtuse angle between the lines generated by the vectors $x$ and $y$. A conjecture of Fejes T\'oth \cite{FT} states that the $N$-point  energy \eqref{eq:poten} (the sum of acute angles) is {\em{maximized}}  by the periodically repeated elements of the orthonormal basis, and  the  continuous version of the conjecture speculates that $I_f$ is {\em{maximized}}  by the discrete measure uniformly distributed over the elements of the orthonormal basis (see \cite{BM} for more details and recent results).

Another similar conjecture stems from mathematical physics and relativistic quantum field theory \cite{finsterSupport2013,FS2}. It concerns the {\em{causal variational principle}}, which, in the spherical case, concerns minimizing  the energy  on $\S^2$ with the kernel  
\begin{equation}\label{eq:causal}
 f( \langle x, y \rangle) = \max \{ 0,\, 2\tau^2 ( 1+ \langle x, y \rangle ) \big( 2- \tau^2 (1- \langle x, y \rangle ) \big) \big\}, 
 \end{equation}
 with a real parameter $\tau >0$.  It is conjectured in \cite{finsterSupport2013} that for any $\tau \ge 1$ {\em{there exists}} a discrete minimizer, and for $\tau > \sqrt{2}$ {\em{all minimizers}} are discrete. In \cite{BGMPV} it is demonstrated that for two values of $\tau$, minimizers are the cross-polytope  and the icosahedron, respectively.

In the present paper we prove a series of results which establish discreteness of minimizers or smallness of their support (or at least the existence of such minimizers) for various classes of energies on $\S^{d-1}$.  
In particular, in Theorem \ref{thm:discrete} we prove  a quantitative version of the following statement:
 \begin{theorem}\label{t:discrete1}
 Assume that  $f\in C[-1,1]$ has only  finitely many positive coefficients in its orthogonal expansion with respect to Gegenbauer polynomials $C_n^\lambda$ (with $\lambda=  \frac{d-2}{2}$). Then there exists a discrete minimizer of the energy $I_f$ on $\S^{d-1}$.   
 \end{theorem}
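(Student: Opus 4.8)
The plan is to exploit the Gegenbauer (ultraspherical) expansion of $f$ together with a dimension-reduction / moment-space argument. Write $f(t) = \sum_{n\ge 0} \widehat f_n\, C_n^\lambda(t)$ with $\lambda = \frac{d-2}{2}$, and let $A = \{ n : \widehat f_n > 0 \}$, which is finite by hypothesis. For a measure $\mu \in \mathcal P(\S^{d-1})$, the energy decomposes as
\begin{equation}\label{eq:decomp}
 I_f(\mu) = \sum_{n \ge 0} \widehat f_n \, \Big( \text{(nonnegative quantity depending on } \mu \text{ at level } n)\Big),
\end{equation}
since each Gegenbauer polynomial is positive-definite on $\S^{d-1}$: writing $Y_{n,1},\dots,Y_{n,h_n}$ for an orthonormal basis of degree-$n$ spherical harmonics, the addition formula gives $C_n^\lambda(\langle x,y\rangle) = c_{n,d}\sum_{j=1}^{h_n} Y_{n,j}(x)\overline{Y_{n,j}(y)}$ with $c_{n,d}>0$, so the level-$n$ term equals $c_{n,d}\sum_j \big|\int Y_{n,j}\,d\mu\big|^2 \ge 0$. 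Thus the terms with $n \notin A$ are minimized (and equal to $0$) whenever $\mu$ is a \emph{spherical $M$-design} for $M = \max A$, i.e. when all moments of $\mu$ up to degree $M$ vanish except the trivial one — equivalently, when $\int Y_{n,j}\,d\mu = 0$ for all $1\le n \le M$.

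The key step is then a \textbf{moment-matching reduction}: among all minimizers of $I_f$ — which exist by weak-$*$ compactness of $\mathcal P(\S^{d-1})$ and lower semicontinuity of $I_f$ (here continuity of $f$ is used) — pick one, call it $\mu^*$, and observe that $I_f(\mu^*)$ depends only on the finitely many moments $\big\{\int Y_{n,j}\,d\mu^* : n \in A,\ 1\le j\le h_n\big\}$ of $\mu^*$, together with the constraint that $\mu^*$ is a probability measure. The number of these real moments is $D := \sum_{n\in A} h_n$ (finite). Now I would invoke a Tchakaloff-type / Carathéodory-type theorem for moments: any probability measure on a compact set whose image under a fixed finite family of $D$ continuous functions is prescribed can be replaced by a finitely supported probability measure — in fact one supported on at most $D+1$ points — with the same values of those $D$ functionals. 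Applying this to $\mu^*$ with the family $\{Y_{n,j}\}_{n\in A, j}$ produces a discrete measure $\nu$ with $\int Y_{n,j}\,d\nu = \int Y_{n,j}\,d\mu^*$ for all $n\in A$. Since the level-$n$ contribution in \eqref{eq:decomp} for $n \in A$ is a function solely of these moments, and since the level-$n$ contributions for $n \notin A$ carry the \emph{negative} coefficient $\widehat f_n \le 0$ multiplying a nonnegative quantity, we get
\begin{equation}
 I_f(\nu) = \widehat f_0 C_0^\lambda + \sum_{n\in A,\, n\ge 1}\widehat f_n(\cdots) + \sum_{n\notin A}\widehat f_n(\cdots)\ \le\ I_f(\mu^*),
\end{equation}
because the first two groups of terms are unchanged and the last group can only decrease (its summands are $\le 0$ for $\mu^*$ and we are free to have them be anything $\le 0$, but in fact $I_f(\mu^*)$ is already minimal so equality must hold and $\nu$ is also a minimizer). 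Hence $\nu$ is a finite discrete minimizer of $I_f$, which is the claim; the quantitative version (Theorem \ref{thm:discrete}) records the bound $|\supp\nu| \le 1 + \sum_{n\in A} h_n$ on the number of atoms.

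The main obstacle I anticipate is making the moment-reduction step fully rigorous and getting the \emph{sharp} cardinality bound. The abstract replacement of $\mu^*$ by a finitely supported measure with matching moments is exactly Tchakaloff's theorem (existence of quadrature), but some care is needed: one must check the relevant moment body is closed / the functionals are continuous on the compact sphere (true here since the $Y_{n,j}$ are continuous), and one should argue that the reduced measure can be taken with \emph{equal weights or arbitrary weights} as the statement of Theorem \ref{t:discrete1} requires only \emph{some} discrete minimizer, so arbitrary positive weights summing to one suffice. A secondary subtlety is that one wants the bound to reflect the true geometry — e.g. one should use that $\int 1\,d\mu = 1$ is one of the constraints, that constant ($n=0$) contributes trivially, and possibly that not all harmonics in the top level are needed — to squeeze the count down; optimizing this, and phrasing the dependence of $I_f$ on moments cleanly via the positive-definiteness decomposition \eqref{eq:decomp}, is where the real work lies, whereas the convexity/positive-definiteness skeleton of the argument is routine.
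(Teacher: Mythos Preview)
Your decomposition of $I_f$ and the idea of fixing the finitely many ``positive'' harmonic moments are exactly right, and they match the paper's setup. The gap is in the passage from the Tchakaloff replacement to the inequality $I_f(\nu)\le I_f(\mu^*)$. Tchakaloff only guarantees \emph{some} finitely supported $\nu$ with $\int Y_{n,j}\,d\nu=\int Y_{n,j}\,d\mu^*$ for $n\in A$; it gives you no control whatsoever over the moments $\int Y_{n,j}\,d\nu$ for $n\notin A$. Your sentence ``the last group can only decrease'' is therefore unjustified: for $n\notin A$ the level-$n$ contribution is $\widehat f_n\cdot c_{n,d}\sum_j\big|\int Y_{n,j}\,d\cdot\big|^2$ with $\widehat f_n\le 0$, and to get $I_f(\nu)\le I_f(\mu^*)$ you would need $\sum_j\big|\int Y_{n,j}\,d\nu\big|^2 \ge \sum_j\big|\int Y_{n,j}\,d\mu^*\big|^2$ for those $n$ (or at least on a weighted average), which a generic Tchakaloff quadrature has no reason to satisfy. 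The parenthetical ``$I_f(\mu^*)$ is already minimal so equality must hold'' is circular: minimality gives only $I_f(\nu)\ge I_f(\mu^*)$, and without the reverse inequality you cannot conclude that $\nu$ is a minimizer.

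The paper closes this gap not by matching moments arbitrarily but by exploiting \emph{convexity} on the moment-constrained set. Write $I_f(\mu)=\mathcal F(\mu)-\mathcal G(\mu)$, where $\mathcal F$ collects the $n\in A$ terms and $\mathcal G(\mu)=\sum_{n\notin A}|\widehat f_n|\,c_{n,d}\sum_j\big|\int Y_{n,j}\,d\mu\big|^2$. On the weak-$*$ compact convex set $K=\{\mu:\int Y_{n,j}\,d\mu=\int Y_{n,j}\,d\mu^*\ \text{for }n\in A\}$ the functional $\mathcal F$ is constant and $\mathcal G$ is convex, hence by the Bauer maximum principle $\mathcal G$ attains its maximum at an \emph{extreme point} $\nu$ of $K$; this is what forces $I_f(\nu)\le I_f(\mu^*)$. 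The discreteness of $\nu$ (with the stated cardinality bound) then comes from Karr's characterization of extreme points of finitely-moment-constrained sets of measures (Theorem~\ref{thm:karr}), which is the sharp form of the Tchakaloff/Carath\'eodory statement you had in mind. So the missing ingredient is: do not pick an arbitrary quadrature for $\mu^*$, pick an extreme point of $K$ --- convexity of the negative part then does the work.
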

\noindent The cardinality of the support of this discrete minimizer is bounded by the dimension of the space of spherical harmonics, corresponding to the positive coefficients of $f$. 
The proof relies on the analysis of the structure of extreme points of the set of moment-constrained measures.  Section \ref{sec:discrete} contains a self-contained exposition of these arguments.

While the discreteness of the minimizers  claimed in Conjecture \ref{conj1} remains out of reach, we establish that the support of the measures minimizing  the $p-$frame energy with  $p\not\in 2\N$ must be small: 
\begin{theorem}\label{t:pframe}
Assume that $p>0$ and $p\not\in 2\N$, and set $f(t) = |t|^p$. Let $\mu \in \mathcal P (\S^{d-1}) $ be a minimizer of the $p$-frame energy $I_f$ \eqref{eq:pframe}. Then the support of $\mu$ has empty interior, i.e. $$ \big( \operatorname{supp}  \mu \big)^\circ = \emptyset.$$
\end{theorem}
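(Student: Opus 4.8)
\section*{Proof strategy for Theorem \ref{t:pframe}}

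The plan is to rule out the possibility that the potential $U^\mu_f(x):=\int_{\S^{d-1}}f(\langle x,y\rangle)\,d\mu(y)$ is constant on a nonempty open subset of $\S^{d-1}$. First I would record the standard facts: since $f(t)=|t|^p$ is continuous on $[-1,1]$, first-order optimality gives $U^\mu_f\ge I_f(\mu)=:\ell$ everywhere, and $\int U^\mu_f\,d\mu=I_f(\mu)$ forces $U^\mu_f=\ell$ $\mu$-a.e.; as $\{U^\mu_f=\ell\}$ is closed, it contains $\supp\mu$, so $U^\mu_f\equiv\ell$ on $\Omega:=(\supp\mu)^\circ$. Assume toward a contradiction that $\Omega\ne\emptyset$. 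The engine of the argument is a second-variation inequality: if $\nu$ is a signed measure with $\nu(\S^{d-1})=0$, whose negative part is a small constant multiple of $\mu$ restricted to a subset of $\Omega$ and whose positive part is supported in $\Omega$, then $\mu+t\nu\in\mathcal P(\S^{d-1})$ for all small $t>0$, the term linear in $t$ of $I_f(\mu+t\nu)=I_f(\mu)+2t\int U^\mu_f\,d\nu+t^2I_f(\nu)$ vanishes (because $U^\mu_f\equiv\ell$ on $\supp\nu$), and minimality of $\mu$ yields
\[
I_f(\nu)=\int_{\S^{d-1}}\int_{\S^{d-1}}|\langle x,y\rangle|^p\,d\nu(x)\,d\nu(y)\ \ge\ 0 .
\]
So it suffices to exhibit one such $\nu$ with $I_f(\nu)<0$.

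For $p\notin\mathbb Z$, I would localize $\nu$ to a tiny geodesic ball $B=B(z_0,\varepsilon)\subseteq\Omega$, on which $\langle x,y\rangle>0$, so $|\langle x,y\rangle|^p=\bigl(1-(1-\langle x,y\rangle)\bigr)^p$ expands in a convergent series in $1-\langle x,y\rangle=\tfrac12|x-y|^2\le 2\varepsilon^2$; using $\langle x,y\rangle^m=\langle x^{\otimes m},y^{\otimes m}\rangle$ this writes $I_f(\nu)$ as a series in the moment tensors $T_l(\nu):=\int x^{\otimes l}\,d\nu$. Choosing $\nu$ with $T_l(\nu)=0$ for $l<k:=\lceil p\rceil+1$ but $T_k(\nu)$ as large as the ball permits (of order $\varepsilon^k\|\nu\|$) collapses the series to $I_f(\nu)=\binom pk\,\|T_k(\nu)\|^2+O(\varepsilon^{2k+2}\|\nu\|^2)$, where the remainder is controlled using $\sum_m|\binom pm|<\infty$. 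Since $p\notin\mathbb Z$, exactly one of the $k$ factors in $\binom pk=\tfrac1{k!}\,p(p-1)\cdots(p-k+1)$ is negative, so $\binom pk<0$, and for $\varepsilon$ small enough $I_f(\nu)<0$ — contradiction, whence $\Omega=\emptyset$. The construction of such a $\nu$ is the delicate part: one takes the negative part a small multiple of $\mu|_B$ and the positive part a mass of comparable size concentrated near $\partial B$, oriented along a direction in which $\mu$'s own $k$-th moment tensor is small; the lower-order moment constraints can be satisfied precisely because $\mu$ charges every open subset of $\Omega$.

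Finally, the odd integers $p$ (also in the hypothesis $p\notin 2\mathbb N$) are not reached by the above, since there $|t|^p$ agrees with a polynomial on each of $[-1,0)$ and $(0,1]$, so the local computation only gives $I_f(\nu)=\|T_p(\nu)\|^2\ge0$: the singularity of $|t|^p$ now lives solely at $t=0$. For this case I would instead split $\nu$ across two small balls near a pair of orthogonal directions $z_0\perp w_0$ inside $\Omega$, so that the governing pairs $(x,y)$ have $\langle x,y\rangle\approx 0$; a one-dimensional Fourier computation shows the cross-energy is controlled by the coefficient $c_p$ in $\widehat{|t|^p}=c_p|\xi|^{-p-1}$ (Hadamard finite part), which is nonzero exactly because $p\notin 2\mathbb N$, and the two pieces can be chosen so that this cross term equals $-|c_p|$ times a positive quantity, again giving $I_f(\nu)<0$. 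This variant requires $\Omega$ to contain an orthogonal pair; in the remaining case $\supp\mu$ has small angular diameter, and the crude bound $I_f(\mu)\ge\bigl(\cos\operatorname{diam}(\supp\mu)\bigr)^p>I_f(\sigma)$ contradicts minimality directly. I expect the main obstacle to be twofold: (i) engineering the test perturbation so that the leading moment (or cross-) term provably dominates the tail, which is where one must use the full-support property of $\mu$ on $\Omega$ quantitatively; and (ii) the odd-integer case, with its auxiliary small-support dichotomy.
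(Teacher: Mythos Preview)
Your overall architecture (first variation $\Rightarrow$ constant potential on $\Omega$, then second variation $\Rightarrow$ $I_f(\nu)\ge 0$ for suitable $\nu$, then exhibit $\nu$ with $I_f(\nu)<0$) is sound and is essentially equivalent to the paper's Lemma~\ref{lem:positive_semidef}. But both branches of your construction have genuine gaps.

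\textbf{The non-integer case.} Your asymptotic formula $I_f(\nu)=\binom{p}{k}\|T_k(\nu)\|^2+O(\varepsilon^{2k+2}\|\nu\|^2)$ is correct \emph{provided} the ambient moments $T_l(\nu)=\int x^{\otimes l}\,d\nu$ vanish \emph{exactly} for $l<k$. The difficulty you underestimate is that on the sphere these are not the same as local (tangent-space) moments: a $k$-th finite-difference measure in normal coordinates has Euclidean moments vanishing to order $k$, but its ambient moments $T_l(\nu)$ are only $O(\varepsilon^k\|\nu\|)$, not zero, because $x=\exp_{z_0}(u)$ is not linear in $u$. Worse, for points on a single great circle the exact ambient constraints $T_0=\cdots=T_{k-1}=0$ actually force $\|T_k(\nu)\|$ to be of strictly smaller order than $\varepsilon^k\|\nu\|$: for instance, when $k=2$ the condition $\int\cos\theta\,d\nu=0$ together with $\int d\nu=0$ forces $\int\theta^2\,d\nu=O(\varepsilon^4\|\nu\|)$, whence $\|T_2(\nu)\|=O(\varepsilon^{3}\|\nu\|)$, and the ``leading'' term no longer dominates the remainder. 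In dimensions $d\ge 3$ one can hope to escape this via traceless second-moment tensors, but you have not supplied the construction, and the additional constraint $\nu^-\le C\mu$ (with $\mu$ completely unknown) adds another layer. In short, the step ``choose $\nu$ with $T_l(\nu)=0$ for $l<k$ and $\|T_k(\nu)\|\asymp\varepsilon^k\|\nu\|$'' is the whole problem, not a detail.

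\textbf{The odd-integer case.} Your dichotomy is false: ``$\Omega$ contains no orthogonal pair'' does \emph{not} imply ``$\operatorname{supp}\mu$ has small diameter''. One can have $\Omega$ a tiny cap while $\operatorname{supp}\mu$ also contains isolated points far away. So neither branch of your argument applies, and the odd case is unproved. (Your small-diameter bound, incidentally, does yield a contradiction once you compare to the orthonormal-basis measure rather than to $\sigma$, but that is moot.) What you actually need is that $\operatorname{supp}\mu$ meets $z^{\perp}$ whenever $z\in\Omega$; this is exactly the content of the paper's Proposition~\ref{prop:II}, proved by applying an iterated Laplace--Beltrami operator $D^{(k)}$ to the constant potential and using $D^{(k)}_x\langle x,y\rangle^p<0$ for $\langle x,y\rangle>0$. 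The paper then pairs this with Proposition~\ref{prop:I}, which shows non--positive-definiteness by placing $2k+1$ points on a short arc near $z$ \emph{together with one point $y\in z^\perp$}; the orthogonal point is essential, since it is the $|\sin((j-k)\varepsilon)|^p$ cross-terms (order $\varepsilon^p$) that produce the negative sign, not the $O(\varepsilon^{4k})$ near-diagonal block. Your purely local attempt tries to do without that orthogonal point, which is precisely why it runs into the difficulties above.
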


\noindent  Section \ref{sec:Empty interior, p-frame} is devoted to the proof of this theorem. In order to compare this theorem  to some known results on $\R^d$, we  point out that discreteness of minimizers for attractive-repulsive potentials on $\R^d$  has been proved  in \cite{carrillo2017geometry} under the assumption that $f$ is {\em mildly repulsive}, i.e. that the potential,  as a function of $r= |x-y|$, behaves as $-r^\alpha$ for small $r$, with $\alpha >2$. Since on the sphere  $\displaystyle{| \langle x,y \rangle |^p \approx 1 - \frac{p}{2} r^2 }$,  the $p$-frame potential corresponds to the endpoint case $\alpha =2$ and thus is quite  delicate: indeed, we know for some values of $p$ there exist non-discrete minimizers. In the recent paper \cite{LMc} it was shown that for  some specific attractive-repulsive potentials with $\alpha \ge 2$, the corresponding energies are uniquely minimized by discrete measures on regular simplices. The complete understanding of the endpoint case $\alpha =2 $ remains an interesting open problem.
 
 In Section \ref{sec:analytic}, we also prove that an analog of Theorem \ref{t:pframe} holds for energies  with kernels  $f:[-1,1]\rightarrow \R$, which are real-analytic, but  not positive definite on $\S^{d-1}$ up to an additive constant (see Definition \ref{def:pd} and Proposition \ref{p:pd}). Theorem \ref{Analytic_supp} states that  for such kernels, minimizing measures have support with empty interior. Moreover, on the circle $\S^1$, they are discrete.  This generates  a certain dichotomy: for an analytic functions $f$, either the energy $I_f$ is minimized by the uniform surface measure $\sigma$, or {\em{all minimizers}} have support with empty interior.

 This result, as well as Theorem \ref{t:discrete1},   obviously applies to polynomials. Thus, when a polynomial $f$ is not positive definite (up to an additive  constant),  the support of {\em{every}} minimizer has empty interior, while for {\em{every}} polynomial $f$ there exists a discrete minimizer of $I_f$ (see Corollary \ref{cor:poly}). For positive definite polynomials $f$, discrete minimizers are just {\em{weighted spherical designs}}, but for arbitrary polynomials, existence of discrete minimizers is new. Section \ref{sec:poly} presents a discussion of energies with polynomial kernels.

 Finally, in Section \ref{sec:locglob} we present an interesting observation that for positive definite kernels $f$, any local minimizer of the energy $I_f$ is necessarily a global minimizer. This applies, in particular, to the $p$-frame energy with even integer values of $p$ and to many other interesting energies.

\section{Background}\label{sec:Background}
\subsection{Spherical harmonics and Gegenbauer polynomials}

For a parameter $\lambda >0$,  consider the weight $\displaystyle{\nu(t) = (1-t^2)^{\lambda -\frac12}}$ on the interval $[-1,1]$, where from now on $\lambda = \frac{d-2}{2}$. The weight $\displaystyle{\nu(t)}$ is related to integration on the sphere $ \S^{d-1}$ in the following way: for a unit vector $p\in \S^{d-1}$,
\begin{equation}
\int_{\S^{d-1}}  f ( \langle x, p \rangle ) d\sigma (x) = I_f ( \sigma) = \frac{\omega_{d-2}}{\omega_{d-1}} \int_{-1}^1 f(t) \big(1-t^2\big)^{\frac{d-3}{2}} \, dt,
\end{equation}
where, as before, $\sigma $ is the normalized surface measure on the sphere $\S^{d-1}$ and $\omega_{d-1} = \frac{2\pi^{d/2}}{\Gamma (d/2)}$ is the $(d-1)$-dimensional Hausdorff surface measure of $\S^{d-1}$. 

Gegenbauer polynomials $C_n^\lambda$, $n\ge 0$,  form a  sequence of   orthogonal polynomials with respect to the weight $\nu(t)$ on the interval $[-1,1]$. Every function $f \in L^1 \big([-1,1], \nu(t)dt \big)$ has a Gegenbauer (ultraspherical) expansion
\begin{equation}\label{eq:expansion}
f(t) \sim \sum_{n=0}^\infty \widehat{f}_n  \frac{n+\lambda}{\lambda} C_n^\lambda (t).
\end{equation}
For $ f \in L^2 \big([-1,1], \nu(t)dt \big)$ this expansion converges to $f$ in the $L^2$ sense. In the case of $\S^1$, when $\lambda = 0$, the relevant polynomials are the Chebyshev polynomials of the first kind
\begin{equation}
T_n (t) = \cos \big( n \arccos t \big) = \frac12 \lim_{\lambda \rightarrow 0}  \frac{n+\lambda}{\lambda} C_n^\lambda (t),
\end{equation}
and for $\S^2$, the polynomials are appropriately scaled Legendre polynomials \cite{szego}.

Let $\mathcal H_n^d$ denote the space of spherical harmonics of order $n$, the functions which are restrictions to $\S^{d-1}$ of homogeneous harmonic polynomials of degree $n$  on $\R^d$. These spaces are mutually orthogonal  for different values of $n$ and satisfy $$ L^2 \big( \S^{d-1}, d\sigma \big)  = \bigoplus_{n=0}^\infty \mathcal H_n^d. $$
Let $\{ Y_{n,j} \}$  be any orthonormal basis in $\mathcal H_n^d$. The Gegenbauer polynomials are related to the spherical harmonics by the following  {\em{addition formula}} 
\begin{equation}\label{eq:add}
    \sum_{j=1}^{a_n^d} Y_{n,j}(x) Y_{n,j}(y) = \frac{n+\lambda}{\lambda} C_n^\lambda(\langle x,  y\rangle)\   \ \textup{ for all }\,\, x,y\in \S^{d-1},
\end{equation}
where  $$ a_n^d  = \text{dim}\ \mathcal{H}_n^d =\frac{n+\lambda}{\lambda} C_n^\lambda(1) = 
 \frac{2n+d-2}{n+d-2} {{n+d-2} \choose {d-2}} .$$

For more detailed information on spherical harmonics, Gegenbauer polynomials, and harmonic analysis on the sphere, we refer the reader to \cite{daiApproximation2013,mullerSpherical2007}.

\subsection{Positive definite functions on $\S^{d-1}$.}  Positive definite functions play an important role in energy minimization.
\begin{definition}\label{def:pd}
A function  $f \in C [-1,1]$ is positive definite on subset $K$ of sphere $\S^{d-1}$, if for every collection of points $\{x_i \}_{i=1}^N \subset K$, the matrix $ \big[ f \big( \langle z_i, z_j  \rangle \big) \big]_{i,j=1}^N$ is positive semidefinite, i.e. for any  sequence $\{c_i\}_{i=1}^N\subset \C$, $f$ satisfies the inequality $$ \sum\limits_{i,j=1}^N c_i\overline{c}_j f \big( \langle x_i, x_j  \rangle \big)\geq 0.$$
\end{definition}
When $K=\S^{d-1}$ is the entire sphere, positive definite functions admit several equivalent characterizations, which connect this property to Gegenbauer polynomials and energy. 

\begin{proposition}\label{p:pd}
Let $f \in C[-1,1]$. The following statements are equivalent:
\begin{enumerate}[(i)]
\item The function $f$ is positive definite on $\S^{d-1}$. \vspace{1 mm}
\item For any signed Borel measure $\nu$ on $\S^{d-1}$, $ I_f (\nu) \ge 0.$ \vspace{1 mm}
\item\label{part:schoen} The coefficients in the ultraspherical expansion \eqref{eq:expansion} of $f$ with respect to Gegenbauer polynomials $C_n^\lambda $ are non-negative: $\widehat{f}_n \ge 0 \,\, \textup{ for all } \,\, n\ge 0 .$ \vspace{1 mm}
\item\label{part:energy} The minimum of the energy $I_f$ over Borel probability measures on $\S^{d-1}$ is achieved by $\sigma$ and is  non-negative, i.e. $$\min_{\mu \in \mathcal P (\mathbb S^{d-1} )} I_f (\mu ) = I_f (\sigma) \ge 0. $$
\end{enumerate}
\end{proposition}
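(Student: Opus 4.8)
The plan is to deduce all four statements from a single identity, the \emph{energy Parseval formula}: for every finite signed Borel measure $\nu$ on $\S^{d-1}$,
\begin{equation*}
  I_f(\nu) \;=\; \sum_{n=0}^{\infty} \widehat f_n \sum_{j=1}^{a_n^d} \Bigl( \int_{\S^{d-1}} Y_{n,j}\, d\nu \Bigr)^{2},
\end{equation*}
the right-hand side being understood in the sense of Ces\`aro summation. This is obtained by substituting the addition formula \eqref{eq:add} into \eqref{eq:muener}; the one genuine subtlety is that the ultraspherical expansion \eqref{eq:expansion} of a merely continuous $f$ need not converge uniformly, so the interchange of summation and integration must be routed through the Ces\`aro means $f^{(m)}$ of sufficiently high order, which are polynomials tending to $f$ uniformly on $[-1,1]$ and whose Gegenbauer coefficients equal $c_{n,m}\widehat f_n$ with $0\le c_{n,m}\le 1$ and $c_{n,m}\to 1$. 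Since $\|\nu\|<\infty$ one has $I_{f^{(m)}}(\nu)\to I_f(\nu)$, and for each polynomial $f^{(m)}$ the formula holds with $\widehat f_n$ replaced by $c_{n,m}\widehat f_n$ and all sums finite.

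Granting the formula, three implications are immediate. $(iii)\Rightarrow(ii)$: every term is nonnegative. $(iii)\Rightarrow(i)$: run the computation for each polynomial $f^{(m)}$, where no convergence question arises and the addition formula applies pointwise, to get $\sum_{k,l} c_k\overline{c_l}\, f^{(m)}(\langle x_k,x_l\rangle)\ge 0$, then pass to the uniform limit. $(iii)\Rightarrow(iv)$: for $\mu\in\mathcal P(\S^{d-1})$ the term $n=0$ equals $\widehat f_0$ for every probability measure, so $I_f(\mu)\ge\widehat f_0=I_f(\sigma)$, and $\widehat f_0\ge 0$ by $(iii)$. For $(iv)\Rightarrow(iii)$ I would perturb $\sigma$: fix $n\ge 1$ and a basis element $Y_{n,j}$; for $|\varepsilon|$ small the density $1+\varepsilon Y_{n,j}$ is positive and integrates to $1$ (since $\int Y_{n,j}\,d\sigma=0$), so $\mu_\varepsilon:=(1+\varepsilon Y_{n,j})\sigma\in\mathcal P(\S^{d-1})$; on expanding $I_f(\mu_\varepsilon)$ the cross term vanishes because $\int f(\langle x,y\rangle)\,d\sigma(x)$ is independent of $y$, leaving $I_f(\mu_\varepsilon)=I_f(\sigma)+\varepsilon^2\widehat f_n$, and minimality of $\sigma$ forces $\widehat f_n\ge 0$; combined with $\widehat f_0=I_f(\sigma)\ge 0$ this is $(iii)$. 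The same evaluation gives $(ii)\Rightarrow(iii)$ at once: testing $(ii)$ against the signed measure $Y_{n,j}\sigma$, the Parseval formula collapses by orthonormality to $I_f(Y_{n,j}\sigma)=\widehat f_n\ge 0$.

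It remains to link the pointwise condition $(i)$ with the integral conditions (this equivalence is essentially Schoenberg's classical characterization of positive definite functions on spheres). The implication $(ii)\Rightarrow(i)$ is trivial: taking $\nu$ finitely supported with real weights shows each matrix $[f(\langle x_i,x_j\rangle)]$ is nonnegative on all real vectors, hence --- being real symmetric --- positive semidefinite. For $(i)\Rightarrow(ii)$, I would approximate: every finite signed Borel measure $\nu$ on the compact set $\S^{d-1}$ is a weak-$*$ limit of finitely supported signed measures $\nu_M$ with $\|\nu_M\|\le\|\nu\|$ (discretize the positive and negative parts of the Jordan decomposition). Since $F(x,y)=f(\langle x,y\rangle)$ is continuous on the compact product $\S^{d-1}\times\S^{d-1}$, and products $\nu_M\otimes\nu_M$ of weak-$*$ convergent measures of uniformly bounded total variation converge weak-$*$ to $\nu\otimes\nu$ (a Stone--Weierstrass argument reducing to kernels $g(x)h(y)$), one obtains $I_f(\nu)=\lim_M I_f(\nu_M)\ge 0$ from $(i)$.

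The principal obstacle is not a single deep step but the careful handling of two pieces of standard harmonic analysis on the sphere: the Parseval identity for continuous, \emph{not necessarily positive definite}, $f$, which needs Ces\`aro summability of ultraspherical expansions rather than plain convergence; and the passage from the discrete formulation $(i)$ to the measure formulation $(ii)$, which rests on the elementary but not fully automatic fact that products of weak-$*$ convergent measures of bounded total variation converge weak-$*$ against continuous kernels. The perturbation argument for $(iv)\Rightarrow(iii)$, the Funk--Hecke-type evaluation $I_f(Y_{n,j}\sigma)=\widehat f_n$, and the remaining trivial implications are then routine.
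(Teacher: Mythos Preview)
Your argument is correct. The paper, however, does not give a self-contained proof of this proposition: it attributes the equivalence $(i)\Leftrightarrow(iii)$ to Schoenberg's classical theorem and, for $(iii)\Rightarrow(iv)$, merely remarks that Lemma~\ref{l:absconv} (proved via Mercer's theorem) together with the addition formula suffices. The remaining implications are left implicit.

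Your route is therefore genuinely more explicit than the paper's. Two differences are worth noting. First, to justify interchanging summation and integration you invoke Ces\`aro summability of ultraspherical expansions for an arbitrary continuous $f$, whereas the paper (via Lemma~\ref{l:absconv}) uses Mercer's theorem, which only applies once positive definiteness is already in hand; your approach thus handles all four conditions symmetrically without first passing through $(iii)$. Second, your perturbation argument $\mu_\varepsilon=(1+\varepsilon Y_{n,j})\sigma$ for $(iv)\Rightarrow(iii)$ and the test $\nu=Y_{n,j}\sigma$ for $(ii)\Rightarrow(iii)$ are clean and direct, yielding the exact identity $I_f(\mu_\varepsilon)=I_f(\sigma)+\varepsilon^2\widehat f_n$; the paper does not spell these out. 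The weak-$*$ approximation for $(i)\Rightarrow(ii)$ is standard and your caveat about products of uniformly bounded weak-$*$ convergent measures is the right technical point to flag.
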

Part \eqref{part:schoen} of this proposition is a classical result due to Schoenberg \cite{schoenbergPositive1941}. Part \eqref{part:energy} states that positive definite functions (up to additive constants) are precisely those potentials  for which  energy minimization imposes uniform distribution. 

Moreover, positive definiteness implies uniform convergence of the Gegenbauer expansion \eqref{eq:expansion}; see e.g. \cite{gangolli}.
\begin{lemma}\label{l:absconv}
Assume that $f\in C[-1,1]$ is positive definite on $\mathbb S^{d-1}$.  
Then the Gegenbauer expansion \eqref{eq:expansion} converges to $f$ absolutely and uniformly on $[-1,1]$. 
\end{lemma}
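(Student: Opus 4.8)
The plan is to reduce the statement to the single summability fact $\sum_{n=0}^\infty \widehat f_n\,a_n^d<\infty$ and then finish by the Weierstrass $M$-test. Since $f$ is positive definite, part~\eqref{part:schoen} of Proposition~\ref{p:pd} gives $\widehat f_n\ge 0$ for every $n$, and since $\lambda=\frac{d-2}{2}\ge 0$ the Gegenbauer polynomials obey the classical bound $|C_n^\lambda(t)|\le C_n^\lambda(1)$ on $[-1,1]$ (for $\lambda=0$ this is just $|T_n|\le 1$). Combining this with the addition formula \eqref{eq:add} evaluated at $x=y$, which gives $\tfrac{n+\lambda}{\lambda}C_n^\lambda(1)=a_n^d$, yields the termwise estimate
\[
\Bigl|\,\widehat f_n\,\tfrac{n+\lambda}{\lambda}\,C_n^\lambda(t)\,\Bigr|\ \le\ \widehat f_n\,a_n^d\qquad\text{for all }t\in[-1,1].
\]
Hence, once $\sum_n\widehat f_n a_n^d<\infty$ is known, the expansion \eqref{eq:expansion} converges absolutely and uniformly on $[-1,1]$; its sum is then continuous, and it already agrees with $f$ in $L^2\big([-1,1],\nu(t)\,dt\big)$ (this being exactly the $L^2$ convergence recorded after \eqref{eq:expansion}), so it agrees with $f$ everywhere.

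The substance of the proof is thus the bound $\sum_n\widehat f_n a_n^d<\infty$, and I would obtain it via Mercer's theorem applied to the kernel $F(x,y):=f(\langle x,y\rangle)$ on the compact space $\S^{d-1}$. By Definition~\ref{def:pd} (equivalently, by item (ii) of Proposition~\ref{p:pd}), $F$ is a continuous, symmetric, positive semidefinite kernel, so the integral operator $T_F g(x)=\int_{\S^{d-1}}f(\langle x,y\rangle)\,g(y)\,d\sigma(y)$ on $L^2(\S^{d-1},\sigma)$ is compact, self-adjoint, and positive. By the rotation invariance of $F$ and the Funk--Hecke formula, each $\mathcal H_n^d$ is an eigenspace of $T_F$; comparing the Mercer expansion $F(x,y)=\sum_n\mu_n\sum_j Y_{n,j}(x)Y_{n,j}(y)$ with \eqref{eq:add} and \eqref{eq:expansion} and using uniqueness of the Gegenbauer expansion identifies the eigenvalue on $\mathcal H_n^d$ as $\mu_n=\widehat f_n\ge 0$. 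Since $T_F$ is a positive operator with continuous kernel on a compact space it is trace class, and the trace identity gives
\[
\sum_{n=0}^\infty \widehat f_n\,a_n^d \;=\; \sum_{n=0}^\infty \mu_n\,\dim\mathcal H_n^d \;=\; \operatorname{tr}T_F \;=\; \int_{\S^{d-1}}F(x,x)\,d\sigma(x) \;=\; f(1)\;<\;\infty,
\]
which is precisely what is needed. (One can also avoid Mercer altogether: the de la Vall\'ee Poussin means $V_Nf$ of the Gegenbauer expansion converge to $f$ uniformly on $[-1,1]$, being convolutions against an approximate identity; evaluating at $t=1$, where all summands $\widehat f_n a_n^d$ are nonnegative, gives $\sum_{n\le N}\widehat f_n a_n^d\le V_Nf(1)\to f(1)$, and the same conclusion follows.)

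I expect the main obstacle to be exactly this passage from an $L^2$-convergent expansion with nonnegative coefficients to the finiteness of $\sum_n\widehat f_n a_n^d$: a priori \eqref{eq:expansion} need not converge at the endpoint $t=1$, so this quantity must be controlled indirectly, either through the spectral/trace structure of $T_F$ as above or through a summability-kernel argument. Everything else is routine: the termwise bound, the Weierstrass $M$-test, and the identification of the sum with $f$. One last minor point to verify is that every $t\in[-1,1]$ equals $\langle x,y\rangle$ for some $x,y\in\S^{d-1}$ (immediate for $d\ge 2$), so that uniform convergence of the Mercer series on $\S^{d-1}\times\S^{d-1}$ transfers to uniform convergence of \eqref{eq:expansion} on $[-1,1]$; but in the approach above this transfer is not even needed, since the $M$-test already delivers uniform convergence directly from $\sum_n\widehat f_n a_n^d<\infty$.
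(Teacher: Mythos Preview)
Your proposal is correct and follows essentially the same route as the paper: the paper does not give a detailed proof but simply points to Mercer's theorem, and you have fleshed out exactly that argument (identifying $T_F$'s eigenspaces via Funk--Hecke, reading off the trace identity $\sum_n \widehat f_n\,a_n^d=f(1)$, and finishing with the $M$-test). Your alternative via de~la~Vall\'ee~Poussin means is a nice elementary bypass of Mercer, but the core approach coincides with the paper's.
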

One of the simplest ways to prove this statement is using Mercer's theorem from spectral theory on the representation of symmetric positive definite functions \cite{mercers}. In turn, Lemma \ref{l:absconv} together with the addition formula \eqref{eq:add} easily imply part \eqref{part:energy} of Proposition \ref{p:pd}. 

Positive definiteness also plays a role  when the energy is not minimized by the uniform measure $\sigma$. In this case, we  have the following implication \cite{Bjorck1956,finsterSupport2013}. 

\begin{lemma}\label{lem:positive_semidef}
Let  $f \in C([-1,1])$. Assume  that  $\mu$ is a minimizer of $I_f$ over $\mathcal P( \S^{d-1})$ and  $I_f (\mu ) \ge 0$. 
Then the function $f$ must be positive definite on $\operatorname{supp} (\mu)$.
\end{lemma}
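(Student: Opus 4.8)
The plan is to argue by contradiction, exploiting the variational characterization of a minimizer. Suppose $\mu$ minimizes $I_f$ over $\mathcal P(\S^{d-1})$, that $I_f(\mu)\ge 0$, but that $f$ is not positive definite on $\operatorname{supp}(\mu)$. Then, by Definition \ref{def:pd}, there exist points $x_1,\dots,x_N\in\operatorname{supp}(\mu)$ and coefficients $c_1,\dots,c_N$ (which we may take real, by replacing the matrix with its real part, since $f$ is real-valued) with $\sum_{i,j} c_i c_j f(\langle x_i,x_j\rangle)<0$ and, without loss of generality, $\sum_i c_i = 0$ (if the sum is nonzero, one can subtract a multiple of a point mass; more carefully, one works with the signed measure below and checks total mass zero can be arranged — see the next paragraph).

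First I would convert this discrete obstruction into a signed measure perturbation. For small $\varepsilon>0$ and small $\delta>0$, let $B_i$ be the geodesic ball of radius $\delta$ around $x_i$; since $x_i\in\operatorname{supp}(\mu)$ we have $\mu(B_i)>0$. Define the signed measure $\nu_\delta = \sum_i c_i\, \mu_{B_i}$, where $\mu_{B_i}$ is $\mu$ restricted to $B_i$ and normalized to be a probability measure. By continuity of $f$, as $\delta\to 0$ we have $I_f(\nu_\delta)\to \sum_{i,j}c_ic_j f(\langle x_i,x_j\rangle)<0$, so for $\delta$ small enough $I_f(\nu_\delta)<0$. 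Arrange the $B_i$ to be disjoint and the $c_i$ to sum to zero, so that $\nu_\delta$ has total mass $0$; then $\mu_t := \mu + t\nu_\delta$ is a probability measure for all sufficiently small $|t|$ (here one uses $\mu(B_i)>0$ to absorb the negative part for small $t$). Expanding,
\begin{equation}
I_f(\mu_t) = I_f(\mu) + 2t\, I_f(\mu,\nu_\delta) + t^2 I_f(\nu_\delta),
\end{equation}
where $I_f(\mu,\nu_\delta)=\iint f(\langle x,y\rangle)\,d\mu(x)\,d\nu_\delta(y)$ is the bilinear form. Since $I_f(\nu_\delta)<0$, the quadratic in $t$ opens downward, so $I_f(\mu_t)<I_f(\mu)$ for all $t$ with $|t|$ large within the admissible range — more precisely, choosing the sign of $t$ so that the linear term is $\le 0$ (or noting that regardless of the linear coefficient the downward-opening parabola drops below $I_f(\mu)$ on one side), we get $I_f(\mu_t)<I_f(\mu)$ for some admissible $t\neq 0$, contradicting minimality of $\mu$.

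The main obstacle — and the only genuinely delicate point — is the reduction to the case $\sum_i c_i = 0$, i.e. producing a \emph{signed} perturbation of total mass zero that still witnesses non–positive-definiteness. If the original quadratic form is negative on a vector with nonzero coordinate sum, one should observe that the hypothesis $I_f(\mu)\ge 0$ already tells us that the form restricted to mean-zero vectors supported near $\operatorname{supp}(\mu)$ cannot be "too positive": indeed, if $f$ were positive semidefinite on all mean-zero combinations of points in $\operatorname{supp}(\mu)$, an approximation argument (convolving $\mu$ with itself and using that finitely supported mean-zero measures are weak-$*$ dense in the mean-zero measures on $\operatorname{supp}(\mu)$) would give $I_f(\nu)\ge 0$ for every signed measure $\nu$ supported on $\operatorname{supp}(\mu)$ with $\nu(\S^{d-1})=0$; combined with the perturbation argument above this forces, for the minimizer, that the form is in fact positive definite on $\operatorname{supp}(\mu)$ in the sense required. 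Thus the cleanest route is: first show that minimality of $\mu$ forces $I_f(\nu)\ge 0$ for all mean-zero signed $\nu$ supported on $\operatorname{supp}(\mu)$ (this is exactly the perturbation computation, with no need to pre-arrange anything), and then deduce positive definiteness of $f$ on $\operatorname{supp}(\mu)$ from the assumption $I_f(\mu)\ge0$ by testing against $\nu = \mu_{B} - \mu(B)\cdot\delta$-type measures, or more directly by an approximation of arbitrary point-mass combinations. I would write the argument in this order, keeping the perturbation lemma as the workhorse and the mean-zero bookkeeping confined to a short remark citing \cite{Bjorck1956,finsterSupport2013}.
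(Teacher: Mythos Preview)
The paper does not actually prove this lemma; it is stated with a citation to \cite{Bjorck1956,finsterSupport2013} and used as a black box. Your outline follows the standard variational argument from those references, and the perturbation part is correct: once $\nu_\delta$ has total mass zero and is absolutely continuous with respect to $\mu$ with bounded density, $\mu+t\nu_\delta\in\mathcal P(\S^{d-1})$ for small $|t|$, and minimality forces $I_f(\mu,\nu_\delta)=0$ and $I_f(\nu_\delta)\ge 0$; letting $\delta\to 0$ yields $I_f(\nu_0)\ge 0$ for every mean-zero signed $\nu_0$ supported on $\supp\mu$.

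The genuine gap is the step you yourself flag: removing the mean-zero restriction. Your first suggestion, ``subtract a multiple of a point mass,'' does not work, since altering one coefficient changes the quadratic form in an uncontrolled way (and for $f\equiv -1$ the form is negative \emph{only} on vectors with $\sum c_i\neq 0$, so no such reduction is possible in general). The correct move is to subtract a multiple of $\mu$ and invoke Lemma~\ref{lem:const_pot}. Given arbitrary $x_1,\dots,x_N\in\supp\mu$ and real $c_1,\dots,c_N$ with $c:=\sum_i c_i$, form your approximation $\widetilde\nu=\sum_i c_i\,\mu_{B_i}$ and apply the mean-zero conclusion to $\widetilde\nu - c\mu$. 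Expanding and using that $F_\mu\equiv I_f(\mu)$ on $\supp\mu$ (Lemma~\ref{lem:const_pot}) gives $I_f(\mu,\widetilde\nu)=c\,I_f(\mu)$, hence
\[
0\ \le\ I_f(\widetilde\nu - c\mu)\ =\ I_f(\widetilde\nu)-2c^2 I_f(\mu)+c^2 I_f(\mu)\ =\ I_f(\widetilde\nu)-c^2 I_f(\mu),
\]
so $I_f(\widetilde\nu)\ge c^2 I_f(\mu)\ge 0$, the last inequality being exactly the hypothesis $I_f(\mu)\ge 0$. Letting $\delta\to 0$ then gives $\sum_{i,j}c_ic_j f(\langle x_i,x_j\rangle)\ge 0$. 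Your final paragraph circles around this but never names the constant-potential lemma, which is the missing ingredient that both kills the cross term and shows precisely where the hypothesis $I_f(\mu)\ge 0$ enters.
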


\noindent Observe that, together with part  \eqref{part:energy} of Proposition \ref{p:pd},  this immediately implies the following: 
\begin{corollary}\label{cor:dich}
{{Either  $\sigma$ is  a minimizer of $I_f$ (i.e. $f$ is  positive definite on $\S^{d-1}$, up to an additive constant), or every  minimizer of $I_f$ is supported on a proper subset of the sphere $\mathbb S^{d-1}$.}}
\end{corollary}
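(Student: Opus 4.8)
The plan is to reduce the claimed dichotomy to a single implication and then chain together Lemma~\ref{lem:positive_semidef} with Proposition~\ref{p:pd}. Logically, the statement asserts only that if $\sigma$ fails to be a minimizer of $I_f$, then every minimizer is supported on a proper subset of $\S^{d-1}$; equivalently, it suffices to show that whenever some minimizer $\mu$ of $I_f$ has $\supp\mu = \S^{d-1}$, the measure $\sigma$ is itself a minimizer. So I would fix such a full-support minimizer $\mu$ and work toward the conclusion that $f$ is positive definite on $\S^{d-1}$ up to an additive constant.

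The only technical device needed is the trivial observation that adding a constant $c$ to $f$ changes $I_f(\nu)$ by exactly $c$ for every $\nu\in\P(\S^{d-1})$, so it leaves the set of minimizers unchanged. Exploiting this, I would pass to $g := f - I_f(\mu)$: then $\mu$ is still a minimizer of $I_g$, and now $I_g(\mu) = 0 \ge 0$. With this normalization in place, Lemma~\ref{lem:positive_semidef} applies to $g$ and $\mu$ and gives that $g$ is positive definite on $\supp\mu = \S^{d-1}$.

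To finish, I would invoke the equivalence of parts (i) and (iv) of Proposition~\ref{p:pd}: since $g$ is positive definite on the whole sphere, $\sigma$ minimizes $I_g$, and undoing the constant shift shows $\sigma$ minimizes $I_f$. The same shift argument together with part (iv) of Proposition~\ref{p:pd} also identifies ``$\sigma$ is a minimizer of $I_f$'' with ``$f$ is positive definite on $\S^{d-1}$ up to an additive constant,'' reconciling the two phrasings of the first alternative. I do not expect any real obstacle here: the only delicate points are keeping track of the additive constant and noticing that the hypothesis $I_g(\mu)\ge 0$ of Lemma~\ref{lem:positive_semidef} is secured precisely by the choice $g = f - I_f(\mu)$.
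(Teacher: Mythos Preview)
Your proposal is correct and is precisely the argument the paper has in mind: it states that the corollary follows immediately from Lemma~\ref{lem:positive_semidef} together with part~\eqref{part:energy} of Proposition~\ref{p:pd}, and your write-up simply makes explicit the additive-constant shift needed to meet the hypothesis $I_g(\mu)\ge 0$ of Lemma~\ref{lem:positive_semidef}.
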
 

Much of this paper is dedicated to obtaining various refinements of this principle for various classes of kernels  $f$. Lemma \ref{lem:positive_semidef} also suggests an approach to proving that a certain set cannot be contained in the support of a minimizer: one may attempt to prove that $f$ is not positive definite on that set. This idea, albeit not in a straightforward fashion, is exploited  in the proof of Theorem \ref{t:pframe} in the next section; see the proof of Proposition \ref{prop:I}. 

\subsection{Gegenbauer expansions and other minimizers}
In some situations, Gegenbauer coefficients can give some information about the minimizers, even when $\sigma$ does not minimize the energy.  Below we mention several relevant results of this type. While we do not use them in this paper, we chose to include them because they are similar in spirit to the results of the paper: they provide certain conditions, under which {\em{there exist}} discrete minimizers or {\em{all}} minimizers are discrete. These results can be found in \cite{bilyk2018geodesic}.

\begin{itemize}
\item If $\widehat f_n \le 0$ for all $n\ge 1$, then a Dirac delta mass $\mu = \delta_z$, for any $z\in \S^{d-1}$, is a minimizer of $I_f$. If $f$ has a strict  absolute minimum at $t=1$ (in particular, if $\widehat f_n < 0$ for all $n\ge 1$), then every minimizer is a Dirac mass. Observe that this case resonates with Theorem \ref{thm:discrete}.
 \item If $(-1)^{n+1} \widehat f_n \ge 0$ for all $n\ge 1$, then  the measure $\mu = \frac12 \big(\delta_z + \delta_{-z}  \big)$ is a minimizer of $I_f$. Moreover, all minimizers are of this form, if  the strict inequality $(-1)^{n+1} \widehat f_n > 0$ holds.  
\item If $\widehat f_{2n} = 0$  and $\widehat f_{2n-1} \ge 0$ for all $n\ge 1$, then every centrally symmetric measure minimizes $I_f$. In particular, there exist discrete minimizers. 
\end{itemize}

We note that for the Euclidean setting, and certain attractive-repulsive potentials, there are classifications of potentials for which two-point measures appear as minimizers, see \cite{kangkim}.

\section{Existence of discrete minimizers}\label{sec:discrete}

\subsection{Extreme points for sets of moment-constrained measures}
In the present section we exhibit a large class of potentials $f$ for which there exist discrete minimizers of the energies $I_f$. The methods that we employ are closely related to {\em{moment problems}}. 

Let $\Omega$ be a compact metric space and let $\mathcal B_+ (\Omega)$ denote the set of positive Borel measures on $\Omega$.  Given continuous functions $f_0,...,f_n$ on $\Omega$ and non-negative constants $c_i$, we consider the set 
 \begin{equation}\label{eq:defk}
     K = \bigg\{ \mu \in \mathcal B_+(\Omega): \int_\Omega f_i d\mu = c_i, \, i=0,1,\dots,n \bigg\},
 \end{equation}
 which consists of  Borel measures whose moments with respect to $f_i \in C(\Omega)$ are fixed. We always set $f_0 \equiv 1$ and $c_0 =1$, so that $\mu \in K$   guarantees  that $\mu $ is a probability measure, i.e. $\mu (\Omega) =1$.

 It is easy to see that $K$ is convex, bounded, and weak-$*$ closed, and therefore is weak-$*$ compact. By the Krein--Milman theorem, $K$ is the weak-$*$ closure of $\operatorname{ext} (K)$ --- the set of extreme points of $K$. The  results  presented below describe  the structure of  $\operatorname{ext} (K)$, in particular, the discreteness of its elements. To make this section self-contained, we    include their proofs.

 We start with a theorem which gives a necessary condition for $\mu$ to be an extreme point of $K$.
 
 \begin{theorem}[Douglas, \cite{douglasExtr1964}]\label{thm:doug}
 Assume that $\mu \in \ext (K)$. Then 
 \begin{equation}
 L^1 (d\mu )  = \sp \{ f_0=1, f_1, \dots, f_n \}. 
 \end{equation}
 \end{theorem}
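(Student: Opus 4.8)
\emph{Proof sketch.} The plan is to argue by contradiction, using the standard Hahn--Banach duality and a sign perturbation. Set $S := \sp\{f_0=1, f_1,\dots,f_n\} \subseteq L^1(d\mu)$. Since $S$ is finite-dimensional, it is a closed subspace of $L^1(d\mu)$, so the claimed equality $L^1(d\mu) = S$ is equivalent to the \emph{density} of $S$ in $L^1(d\mu)$. Suppose this fails. Then, since $\mu$ is a finite (hence $\sigma$-finite) measure so that $\big(L^1(d\mu)\big)^* = L^\infty(d\mu)$, the Hahn--Banach theorem produces a nonzero $g \in L^\infty(d\mu)$ annihilating $S$, i.e. $\int_\Omega f_i\, g\, d\mu = 0$ for $i=0,1,\dots,n$; because each $f_i$ is real-valued we may replace $g$ by its real or imaginary part, whichever is nonzero, and then rescale so that $g$ is real, $\|g\|_{L^\infty(d\mu)} \le 1$, and $\mu(\{|g|>0\})>0$.

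The second step is the perturbation. Define the two measures $\mu_{\pm} := (1 \pm g)\,d\mu$. Since $-1 \le g \le 1$ holds $\mu$-a.e., the densities $1\pm g$ are non-negative, so $\mu_\pm \in \mathcal B_+(\Omega)$; and for every $i$,
\[
\int_\Omega f_i\, d\mu_\pm = \int_\Omega f_i\, d\mu \pm \int_\Omega f_i\, g\, d\mu = c_i ,
\]
so $\mu_\pm \in K$ (in particular $\mu_\pm(\Omega)=c_0=1$). Clearly $\mu = \tfrac12(\mu_+ + \mu_-)$, while $\mu_+ - \mu_- = 2g\,d\mu$ is not the zero measure because $g$ is not $\mu$-a.e. zero; hence $\mu_+ \ne \mu_-$. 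This displays $\mu$ as a nontrivial convex combination of two distinct elements of $K$, contradicting $\mu \in \ext(K)$. Therefore $S$ is dense, and being closed, $S = L^1(d\mu)$.

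I do not anticipate a genuine obstacle: the only point requiring a little care is the extraction of a \emph{real}, \emph{bounded} annihilator of $S$, which is the routine combination of Hahn--Banach, the identification $(L^1)^*=L^\infty$ for finite measures, and the fact that the $f_i$ are real. It is worth recording immediately after the proof that, since $\dim S \le n+1$, the identity $L^1(d\mu) = S$ forces $L^1(d\mu)$ to be finite-dimensional, hence $\mu$ is a discrete measure supported on at most $n+1$ points — which is exactly the structural conclusion this section is aiming toward.
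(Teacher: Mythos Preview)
Your proof is correct and is essentially the same as the paper's: both exhibit a nonzero annihilator $g\in L^\infty(d\mu)$ of the span and form the perturbations $d\mu_\pm=(1\pm g)\,d\mu$ to contradict extremality. The only differences are cosmetic---you frame it as a Hahn--Banach/contradiction argument and normalize to $\|g\|_\infty\le 1$, while the paper shows directly that every bounded annihilator vanishes and takes $\|g\|_\infty<1$; neither change affects the substance.
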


 \begin{proof}
 Assume that $g\in L^\infty (d\mu )$ satisfies $$ \int_\Omega f_i g  d\mu = 0, \qquad i=0,1,\dots,n. $$
 Multiplying $g$ by a constant, we may assume that $\| g \|_{L^\infty (d\mu )} < 1$. Then the measures $\mu_\pm$,  defined by  $d\mu_{\pm} = (1\pm g)  d\mu$,  belong to $K$, since $\mu_\pm \in \mathcal B_+ (\Omega)$ and  $$ \int _\Omega  f_i d\mu_\pm = \int_\Omega f_i (1 \pm g) d\mu =  \int_\Omega f_i  d\mu = c_i. $$
 At the same time, $\mu = \frac12 (\mu_-+\mu_+)$. Since $\mu \in \ext K$, this implies that $\mu_\pm = \mu$ and hence $g=0$ $\mu$-a.e. Therefore, the functions $f_i$ span $L^1 (d\mu)$. 
 \end{proof}

We now state and prove a result, which demonstrates the discreteness of the elements of $\ext (K)$. This result has a number of precursors and extensions, see \cite{Richter,rogo1,rogo,rose,winkler,zuhov}.

\begin{theorem}[Karr, \cite{karrExtr1983}]\label{thm:karr}
Let $\mu \in K$. Then the following statements are equivalent:
\begin{enumerate}[(i)]
\item\label{G1} $\mu \in \ext (K)$.
\item\label{G2}  The cardinality of $\supp \mu$ is at most $n+1$. Moreover,  if we denote $\supp \mu = \{x_1, \dots, x_k \}$, then the vectors $v_j =  \big(  1, f_1 (x_j), \dots, f_n (x_j) \big)$, $j=1,2,\dots, k$, are linearly independent. 
\end{enumerate}
\end{theorem}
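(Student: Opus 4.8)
The plan is to prove the two implications of the equivalence separately. The direction \text{(ii)}$\,\Rightarrow\,$\text{(i)} is elementary linear algebra, while \text{(i)}$\,\Rightarrow\,$\text{(ii)} will rely on Douglas's theorem (Theorem~\ref{thm:doug}).

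For \text{(ii)}$\,\Rightarrow\,$\text{(i)}: write $\supp\mu = \{x_1,\dots,x_k\}$ and $\mu = \sum_{j=1}^k a_j\delta_{x_j}$ with all $a_j>0$, and assume the vectors $v_j = \big(1, f_1(x_j),\dots,f_n(x_j)\big)$, $j=1,\dots,k$, are linearly independent. If $\mu = \tfrac12(\nu_1+\nu_2)$ with $\nu_1,\nu_2\in K$, then $2\mu = \nu_1+\nu_2$ and $\nu_l\ge 0$ force $\nu_l \le 2\mu$ as measures, so both $\nu_1,\nu_2$ are supported on $\{x_1,\dots,x_k\}$; writing $\nu_l = \sum_j b_j^{(l)}\delta_{x_j}$ with $b_j^{(l)}\ge 0$, the moment conditions become $\sum_j b_j^{(l)} v_j = (c_0,\dots,c_n) = \sum_j a_j v_j$. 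Subtracting and using linear independence of the $v_j$ gives $b_j^{(1)} = a_j$ for all $j$, hence $\nu_1 = \mu$ and likewise $\nu_2 = \mu$; thus $\mu\in\ext(K)$. I expect this step to be completely routine.

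For \text{(i)}$\,\Rightarrow\,$\text{(ii)}: assume $\mu\in\ext(K)$. By Theorem~\ref{thm:doug}, $L^1(d\mu) = \sp\{1,f_1,\dots,f_n\}$, a space of dimension at most $n+1$. The first sub-step is to conclude that $\supp\mu$ is finite with at most $n+1$ points. If $\supp\mu$ contained $n+2$ distinct points, I would pick pairwise disjoint open balls $U_1,\dots,U_{n+2}$ around them; each $U_l$ meets $\supp\mu$ and therefore has $\mu(U_l)>0$, so the indicators $\mathbf{1}_{U_1},\dots,\mathbf{1}_{U_{n+2}}$ are linearly independent in $L^1(d\mu)$ (a combination vanishing $\mu$-a.e.\ must have all coefficients zero, since the $U_l$ are disjoint of positive measure), contradicting $\dim L^1(d\mu)\le n+1$. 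Hence $\mu = \sum_{j=1}^k a_j\delta_{x_j}$ with $a_j>0$ and $k\le n+1$. The second sub-step is the linear independence of the $v_j$: if $\sum_j t_j v_j = 0$ with $(t_1,\dots,t_k)\neq 0$, then $\sum_j t_j = 0$ and $\sum_j t_j f_i(x_j) = 0$ for $i=1,\dots,n$, so for $\varepsilon>0$ small enough the measures $\nu_\pm = \sum_j (a_j \pm \varepsilon t_j)\delta_{x_j}$ are nonnegative, lie in $K$, and satisfy $\mu = \tfrac12(\nu_++\nu_-)$ with $\nu_+\neq\nu_-$, contradicting extremality.

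The main obstacle — really the only non-formal point — is the passage from finite-dimensionality of $L^1(d\mu)$ to finiteness of $\supp\mu$ with the correct cardinality bound; the rest is bookkeeping with the moment equations. The care needed there is, first, separating distinct support points by disjoint open sets of positive measure, which is where metrizability of $\Omega$ and the definition of the support enter, and, second, verifying that the corresponding indicator functions are independent as elements of $L^1(d\mu)$, i.e.\ modulo $\mu$-null sets, rather than merely pointwise.
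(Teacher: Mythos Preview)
Your proof is correct and follows the same overall strategy as the paper: Douglas's theorem for the cardinality bound in (i)$\Rightarrow$(ii), a perturbation argument for the linear independence of the $v_j$, and uniqueness of coefficients for (ii)$\Rightarrow$(i). The only difference is in how you extract the bound $|\supp\mu|\le n+1$ from $\dim L^1(d\mu)\le n+1$: you exhibit $n+2$ linearly independent indicator functions of disjoint balls, whereas the paper uses Urysohn's lemma to build a single continuous function lying outside $\sp\{f_i\}$ --- your version is a touch more direct and sidesteps the appeal to Urysohn.
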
 

\begin{proof}
\noindent (\ref{G1})$\Rightarrow$(\ref{G2}).  Assume that there exist points $\{ x_1, \dots, x_{n+2} \} \subset \supp \mu$. Then one can find a vector $y \in \mathbb R^{n+2}$, which is not in the span of the vectors $\big(f_i (x_1),  f_i (x_2), \dots, f_i (x_{n+2}) \big)$, $i=0,1,\dots, n$, since the latter subspace is at most $n+1$ dimensional. Appealing to Urysohn's lemma, one can construct a  function $g \in C(\Omega) \subset L^1 (d\mu)$ such that $g (x_i ) = y_i$  for $i=1,2,\dots, n+2$.  But then $g\not\in \sp \{ f_i\}$, which contradicts Theorem \ref{thm:doug}, i.e. $\big| \supp \mu  \big| \le n+1$. 

Now that it is known that $\mu  = \sum_{i=1}^{k} t_i \delta_{x_i}$ with $k\le n+1$, $t_i > 0$, $\sum t_i =1$, consider the linear system 
\begin{equation}\label{eq:sys}
\begin{pmatrix} 1 & \dots & 1 \\
 f_1 (x_1) & \dots & f_1 (x_k) \\
 \vdots & \ddots & \vdots \\
f_n(x_1)  & \dots & f_n (x_k)  \end{pmatrix}
\begin{pmatrix} \alpha_1 \\ \alpha_2 \\ \vdots \\ \alpha_k    \end{pmatrix}  = 
\begin{pmatrix}  1 \\ c_1 \\ \vdots \\ c_n    \end{pmatrix}.
\end{equation} 
This system has a unique solution $\alpha_i = t_i$, since if the solution is not unique, then there is a whole affine subspace of solutions and one could perturb the values of $t_i$ in opposite directions, i.e.\ find two solutions of the form $\{ t_i \pm \tau_i \}$, and construct two measures $\mu_\pm =   \sum_{i=1}^{k} (t_i \pm \tau_i)  \delta_{x_i}$ so that $\mu_\pm \ge 0 $ and  $\int f_i d\mu_\pm = \int f_i d\mu$, 
i.e. $\mu_\pm \in K $, and  $\mu = \frac12 (\mu_+ + \mu_-) $, which contradicts the fact that $\mu \in \ext (K)$. This proves the linear independence of the rows of the matrix above. \\

\noindent (\ref{G2})$\Rightarrow$(\ref{G1}). Assume that (\ref{G2}) holds. Then the system \eqref{eq:sys} has a unique solution, i.e. $\mu$ is uniquely determined by the the condition $\supp \mu \subset \{x_1, \dots, x_k \}$. If $\mu = \frac12 (\mu_1 + \mu_2)$, then $\supp \mu \subset \supp \mu_1 \cup \supp \mu_2$, and thus $\supp \mu_j \subset \{x_1,\dots, x_k\}$ for $j=1,2$. Therefore $\mu_1=\mu_2 = \mu$, i.e. $\mu \in \ext (K)$.  
\end{proof}

We remark that convex geometry plays heavily into similar characterizations of solutions to infinite dimensional optimization problems in the recent papers \cite{boyer,chandra,unser}.

\subsection{Applications of Karr's theorem: existence of discrete minimizers.}
We now apply the results on moment-constrained  measures to prove that for a function $f$ with only finitely many positive terms in its Gegenbauer expansion, there exist discrete minimizers of $I_f$. 

Let $\widehat f_n$ denote the coefficients in the Gegenbauer expansion  \eqref{eq:expansion} of the  function $f\in C[-1,1]$.  
Consider the sets $N_+ (f) = \{ n\ge 0: \widehat f_n > 0\}$ and $N_- (f) = \{ n\ge 0:  \widehat f_n < 0\}$. We shall assume that 
\begin{equation}\label{eq:finpos}
\big| N_+ (f) \big| <\infty,
\end{equation}
 i.e.\ there are only finitely many terms of \eqref{eq:expansion} with $\widehat f_n >0$.  In this case, the function  $$\displaystyle{\sum_{n\in N_+(f) }  \widehat f_n \,\frac{n+\lambda}{\lambda} C_n^\lambda (t) -  f(t) }$$
is continuous and positive definite. According to Lemma \ref{l:absconv}, this implies that the Gegenbauer expansion \eqref{eq:expansion} of $f$ converges uniformly and absolutely.  

Recall that $\mathcal H_n^d$ denotes the space of spherical harmonics of degree $n$ on $\S^{d-1}$. We are now ready to state the main theorem of the section. 

\begin{theorem}\label{thm:discrete}
Assume that the Gegenbauer expansion \eqref{eq:expansion} of the function $f \in C[-1,1]$ satisfies   $$ \big| N_+ (f)  \big|= \big| \{ n\ge 0:  \widehat f_n > 0\} \big| <\infty, $$ i.e.\ the Gegenbauer expansion has only finitely many positive terms. Then there exists a discrete measure $\mu^* \in \mathcal P (\S^{d-1})$ such that
\begin{equation}\label{eq:suppbound}
\big| \supp \mu^*  \big|  \le \sum_{n\in N_+(f) \cup \{0\} }  \dim \mathcal H_n^d,
\end{equation}
and $\mu^*$ minimizes  the energy $I_f (\mu) $ over $\mathcal P (\S^{d-1})$, i.e.
\begin{equation}
I_f (\mu^*) = \inf_{\mu \in \mathcal P (\S^{d-1})} I_f (\mu).
\end{equation}
\end{theorem}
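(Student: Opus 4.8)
The plan is to deduce the existence of a discrete minimizer from the structure of extreme points of a suitable moment-constrained set, i.e.\ from Karr's theorem (Theorem~\ref{thm:karr}). First I would record that $I_f$ attains its minimum over $\mathcal P(\S^{d-1})$: the map $\mu\mapsto I_f(\mu)$ is weak-$*$ continuous (since $f\in C[-1,1]$ and $\mu\mapsto\mu\times\mu$ is weak-$*$ continuous into $\mathcal P(\S^{d-1}\times\S^{d-1})$), and $\mathcal P(\S^{d-1})$ is weak-$*$ compact; fix a global minimizer $\mu_0$. Next, set $N=N_+(f)\cup\{0\}$ and write $f=h-g$, where
\[
h(t)=\sum_{n\in N}\widehat f_n\,\frac{n+\lambda}{\lambda}\,C_n^\lambda(t)
\]
is a polynomial and $g=h-f$. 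By construction $\widehat g_n=0$ for $n\in N$, while $\widehat g_n=-\widehat f_n\ge 0$ for $n\notin N$, so $g$ is continuous and positive definite on $\S^{d-1}$ by Schoenberg's criterion (Proposition~\ref{p:pd}(\ref{part:schoen})). The identity $f=h-g$ is legitimate because, by Lemma~\ref{l:absconv} applied to $g$, the Gegenbauer expansion of $f$ converges absolutely and uniformly.

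Fix an $L^2(d\sigma)$-orthonormal basis $\{Y_{n,j}\}_{j=1}^{a_n^d}$ of $\mathcal H_n^d$ for each $n$ (so $Y_{0,1}\equiv 1$), and consider
\[
K=\Big\{\mu\in\mathcal B_+(\S^{d-1}):\ \textstyle\int_{\S^{d-1}}Y_{n,j}\,d\mu=\int_{\S^{d-1}}Y_{n,j}\,d\mu_0\ \text{ for all }n\in N,\ 1\le j\le a_n^d\Big\}.
\]
The $n=0$ constraint forces $\mu(\S^{d-1})=1$, so $K\subset\mathcal P(\S^{d-1})$; moreover $K$ is convex, weak-$*$ compact, and nonempty ($\mu_0\in K$), and it is cut out by exactly $\sum_{n\in N}a_n^d=\sum_{n\in N_+(f)\cup\{0\}}\dim\mathcal H_n^d$ moment functions, one of which is the constant $1$. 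The point of the splitting is that on $K$ the functional $I_f$ becomes, up to an additive constant, the negative of a \emph{convex} functional: by the addition formula~\eqref{eq:add},
\[
I_h(\mu)=\sum_{n\in N}\widehat f_n\sum_{j=1}^{a_n^d}\Big(\textstyle\int_{\S^{d-1}}Y_{n,j}\,d\mu\Big)^2,
\]
which depends only on the constrained moments, hence $I_h(\mu)=I_h(\mu_0)$ for every $\mu\in K$. Consequently $I_f(\mu)=I_h(\mu_0)-I_g(\mu)$ on $K$, where, integrating the uniformly convergent expansion of $g$ term by term,
\[
I_g(\mu)=\sum_{n\notin N}(-\widehat f_n)\sum_{j=1}^{a_n^d}\Big(\textstyle\int_{\S^{d-1}}Y_{n,j}\,d\mu\Big)^2
\]
is a nonnegative linear combination of squares of weak-$*$ continuous linear functionals of $\mu$, hence convex and weak-$*$ continuous on $K$.

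Finally I would invoke Bauer's maximum principle: a convex, upper semicontinuous function on a nonempty compact convex subset of a locally convex space attains its maximum at an extreme point. (Equivalently, the set of maximizers is a nonempty compact face of $K$, and the extreme points of that face are extreme points of $K$.) Applied to $I_g$ on $K$, this yields $\mu^*\in\ext(K)$ with $I_g(\mu^*)=\max_{\mu\in K}I_g(\mu)$, i.e.\ $I_f(\mu^*)=\min_{\mu\in K}I_f(\mu)$. Since $\mu_0\in K$ is a global minimizer, $I_f(\mu^*)\le I_f(\mu_0)=\min_{\mathcal P(\S^{d-1})}I_f$, so $\mu^*$ is a global minimizer of $I_f$. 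By Karr's theorem (Theorem~\ref{thm:karr}) applied to $K$ — cut out by the $\sum_{n\in N_+(f)\cup\{0\}}\dim\mathcal H_n^d$ moment functions above, including $f_0\equiv 1$ — the extreme point $\mu^*$ is discrete with $|\supp\mu^*|\le\sum_{n\in N_+(f)\cup\{0\}}\dim\mathcal H_n^d$, which is exactly~\eqref{eq:suppbound}.

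The step I expect to be the crux is recognizing the right direction of convexity: after the decomposition $f=h-g$, minimizing $I_f$ on $K$ amounts to \emph{maximizing} the convex functional $I_g$, which is precisely the situation in which extreme points are forced (Bauer's principle), whereas the corresponding minimization would gain nothing. The remaining ingredients — existence and continuity of the minimizer, positive definiteness of $g$, weak-$*$ compactness of $K$, and the term-by-term manipulations justified by Lemma~\ref{l:absconv} — are routine given the background assembled above.
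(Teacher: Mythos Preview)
Your proof is correct and follows essentially the same route as the paper: decompose $f$ into its positive- and negative-coefficient parts (the paper's $\mathcal F$ and $\mathcal G$ are your $I_h$ and $I_g$), freeze the positive part by fixing the harmonic moments indexed by $N_+(f)\cup\{0\}$, then maximize the convex functional $I_g$ over the resulting moment-constrained set $K$ and apply Karr's theorem to the extremal maximizer. Your version is slightly more explicit in naming Bauer's maximum principle and in verifying existence and weak-$*$ continuity, but the argument is the same.
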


\begin{proof} 
Let $\nu \in \mathcal P (\S^{d-1})$ be any minimizer of $I_f$ and set $$ \displaystyle{M = \inf_{\mu \in \mathcal P (\S^{d-1})}  I_f (\mu ) = I_f (\nu)}.$$  %
We shall use the addition formula for spherical harmonics \eqref{eq:add},
as well as the absolute convergence of \eqref{eq:expansion}, to re-write for any measure $\mu \in \mathcal P (\S^{d-1})$, $I_f (\mu )$ as,
\begin{align*}
\sum_{n=0}^\infty   \widehat f_n  \int_{\S^{d-1}}   \int_{\S^{d-1}}     \frac{n+\lambda}{\lambda}\ C_n^\lambda  (\langle x, y \rangle )  d\mu(x) d\mu (y) = \sum_{n=0}^\infty   \widehat f_n   \Bigg[ \sum_{j=1}^ {\dim \mathcal H_n^d} \bigg( \int_{\S^{d-1}}  Y_{n,j} (x) d\mu (x) \bigg)^2 \Bigg]\\
= \sum_{n\in N_+ (f) }   \widehat f_n   \Bigg[ \sum_{j=1}^ {\dim \mathcal H_n^d} \bigg( \int_{\S^{d-1}}  Y_{n,j} (x) d\mu (x) \bigg)^2 \Bigg] - \sum_{n\in N_- (f) }  \big(- \widehat f_n\big)   \Bigg[ \sum_{j=1}^ {\dim \mathcal H_n^d} \bigg( \int_{\S^{d-1}}  Y_{n,j} (x) d\mu (x) \bigg)^2 \Bigg],\\
\end{align*}
the last of which we define as the difference of functionals $\mathcal F(\mu ) - \mathcal G(\mu)$. It is easy to see that $\mathcal G$ is convex with respect to $\mu$ since it is a positive linear combination of squares of linear functionals of $\mu$. Let us set $$K = \Bigg\{ \mu \in \mathcal B_+ (\S^{d-1}) :  \int_{\S^{d-1}}  Y_{n,j} d\mu (x) = \int_{\S^{d-1} } Y_{n,j} d\nu (x),\,  n\in N_+ (f),\, j=1,2,\dots, \dim \mathcal H_n^d \Bigg\} , $$ so that $\nu \in K$ and $\mathcal F (\mu) = \mathcal F (\nu) $ for $\mu \in K$. 
Without loss of generality, we shall assume that $0\in N_+ (f)$. This guarantees that $\mu \in K$ is a probability measure (similarly to setting $c_0=1$ and $f_0 \equiv 1$ earlier). Since $N_+ (f) <\infty$, the set $K$ has finitely many moment constraints and Theorem \ref{thm:karr} is applicable. In fact, the number of constraints is exactly the right-hand side of \eqref{eq:suppbound}. 

Given that  $\mathcal G$ is convex in $\mu$ and $K$ is a convex weak-$*$ compact subset of $\mathcal B_+ (\S^{d-1})$, we conclude that $\mathcal G(\mu)$ achieves its maximum on $K$ at a point of $\ext (K)$. Hence there exists a measure $\mu^* \in \ext (K)$ such that $\displaystyle{ \mathcal G (\mu^*) = \sup_{\mu \in K} \mathcal G (\mu)}$. We then find that 
\begin{align*}
M & = I_f (\nu) = \mathcal F(\nu) - \mathcal G(\nu) = \mathcal F(\mu^*) - \mathcal G(\nu) \ge \mathcal  F(\mu^*) - \mathcal G(\mu^*) = I_f (\mu^*) \ge M, 
\end{align*}
i.e. $I_f (\mu^*) = M$ and $\mu^*$ is also a minimizer of $I_f$. 

Since $\mu^* \in \ext (K)$,  we can apply  Karr's theorem (Theorem \ref{thm:karr}) to finish the proof of the theorem. \end{proof}

\section{Empty interior of $p$-frame energy minimizers: the proof of Theorem \ref{t:pframe}}\label{sec:Empty interior, p-frame}

Conjecture \ref{conj1}, stating that the minimizers of the $p$-frame energy with $p\not\in 2 \mathbb N$ are necessarily discrete, remains open, outside of some specific cases covered in our companion paper \cite{BGMPV}. In the present section, we prove a weaker statement, namely,  that the support of every  minimizer of such energies has empty interior, i.e. Theorem \ref{t:pframe}.

A similar result has been proved in \cite{finsterSupport2013} for the energy on $\S^2$ with the kernel given by \eqref{eq:causal}. While our approach is inspired by theirs and the main line of reasoning  follows an analogous path, specific constructions and arguments  in the proofs of  Propositions \ref{prop:I} and \ref{prop:II} below are much more peculiar and significantly more involved in the case of the $p$-frame energy. 

We shall need  a standard fact from potential theory (\cite{Bjorck1956, borodachovMinimal, landkofFoun1972}). 
For a measure $\mu \in \mathcal P(\S^{d-1})$, let us define the {\em{potential}} $F_\mu$ of $\mu$ with respect to $f$ as
\begin{equation}\label{e.4.6}
F_\mu(x) : = \int_{\mathbb{S}^{d-1}} f( \langle x, y \rangle) d \mu(y),\,\,\, x\in \S^{d-1}.
\end{equation}
Notice that this meaning  of the term ``potential'' is consistent with our previous usage, since the function $f( \langle x, y \rangle)$ is just the potential generated by a unit point charge at $y$, i.e. $ f( \langle x, y \rangle) = F_{\delta_y} (x)$. It is a well-known phenomenon from electrostatics that the potential of the equilibrium measure is constant on the support of the measure.
\begin{lemma}\label{lem:const_pot} If $f \in C([-1,1])$ and $\mu$ is a minimizer of $I_f$, then the potential $F_{\mu}$ 
is constant on the support of $\mu$:
\begin{equation}
 F_{\mu} \big|_{\operatorname{supp} \mu} = \inf_{ x \in \mathbb{S}^{d-1}} F_{\mu}(x) = I_{f}(\mu).
 \end{equation}
\end{lemma}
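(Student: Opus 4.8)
The plan is to exploit the fact that $\mu$ minimizes the quadratic functional $I_f(\mu) = \int_{\S^{d-1}} F_\mu(x)\, d\mu(x)$ by making variational perturbations of $\mu$ within the convex set $\mathcal P(\S^{d-1})$. First I would establish the lower bound $F_\mu(x) \ge I_f(\mu)$ for all $x \in \S^{d-1}$: suppose toward a contradiction that $F_\mu(x_0) < I_f(\mu)$ at some point $x_0$, and consider the perturbed measure $\mu_t = (1-t)\mu + t\delta_{x_0}$ for small $t \in (0,1]$. Since $I_f$ is quadratic, $I_f(\mu_t) = (1-t)^2 I_f(\mu) + 2t(1-t) F_\mu(x_0) + t^2 f(1)$, and differentiating at $t=0$ gives $\frac{d}{dt}\big|_{t=0} I_f(\mu_t) = -2 I_f(\mu) + 2 F_\mu(x_0) = 2\big(F_\mu(x_0) - I_f(\mu)\big) < 0$, so $I_f(\mu_t) < I_f(\mu)$ for small $t>0$, contradicting minimality. (Note $f \in C[-1,1]$ ensures all quantities are finite.) Hence $F_\mu \ge I_f(\mu)$ everywhere, and in particular $\inf_{x \in \S^{d-1}} F_\mu(x) \ge I_f(\mu)$.

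Next I would show $F_\mu(x) = I_f(\mu)$ for $\mu$-a.e. $x$. Integrating the inequality $F_\mu \ge I_f(\mu)$ against $\mu$ gives $I_f(\mu) = \int F_\mu \, d\mu \ge I_f(\mu)$, with equality throughout; since $F_\mu - I_f(\mu) \ge 0$ and has zero integral against $\mu$, it must vanish $\mu$-almost everywhere. Combined with the previous step, this also forces $\inf_{x} F_\mu(x) = I_f(\mu)$, since the infimum cannot exceed the $\mu$-a.e. value.

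Finally I would upgrade "$\mu$-a.e." to "everywhere on $\operatorname{supp}\mu$" using continuity. Because $f \in C[-1,1]$ and $\mu$ is a finite measure, $F_\mu$ is continuous on $\S^{d-1}$ (dominated convergence, or uniform continuity of $f(\langle x, \cdot\rangle)$ in $x$). The set $\{x : F_\mu(x) = I_f(\mu)\}$ is therefore closed and has full $\mu$-measure, so it contains $\operatorname{supp}\mu$, the smallest closed set of full measure. This yields $F_\mu\big|_{\operatorname{supp}\mu} \equiv I_f(\mu) = \inf_{x \in \S^{d-1}} F_\mu(x)$, completing the proof. I do not anticipate a serious obstacle here; the only point requiring mild care is the continuity of $F_\mu$, which is routine given $f \in C[-1,1]$, and the passage from the a.e. statement to a statement on all of $\operatorname{supp}\mu$, which is exactly where that continuity is used.
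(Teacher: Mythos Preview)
Your proof is correct and is precisely the standard variational argument for this classical result. The paper does not actually supply its own proof of Lemma~\ref{lem:const_pot}; it simply states the lemma as ``a standard fact from potential theory'' with references to \cite{Bjorck1956, borodachovMinimal, landkofFoun1972}, and the argument you outline is exactly the one found in those sources.
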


  In what follows, the value of $p \in \mathbb R_+ \setminus 2 \mathbb N$ is fixed, $f(t) = |t|^p$, and $\mu$ is assumed to be a minimizer of  $I_f$.  The proof of Theorem \ref{t:pframe} is based on two properties of interior points of $\operatorname{supp} (\mu) $.

\begin{proposition}\label{prop:I}
Let $p \in \mathbb R_+ \setminus 2 \mathbb N$, $f(t) = |t|^p$, and $\mu$ be a minimizer of  $I_f$. Then for $z \in \big( \operatorname{supp} \mu  \big)^\circ$, $$\operatorname{supp} \mu  \cap  z^\perp  = \emptyset.$$
\end{proposition}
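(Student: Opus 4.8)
The plan is to use that the kernel must be positive definite on $\operatorname{supp}\mu$ and to violate this by a configuration built near the interior point $z$ together with a putative point of $z^\perp$. Since $f(t)=|t|^p\ge 0$ we have $I_f(\mu)\ge 0$, so Lemma~\ref{lem:positive_semidef} applies: $f$ is positive definite on $\operatorname{supp}\mu$, i.e. for every finite family $\{p_1,\dots,p_m\}\subset\operatorname{supp}\mu$ and every real vector $(c_1,\dots,c_m)$,
\[
\sum_{i,j=1}^m c_ic_j\,\big|\langle p_i,p_j\rangle\big|^p\ \ge\ 0 .
\]
Assuming $\operatorname{supp}\mu\cap z^\perp\neq\emptyset$, I would produce a configuration inside $\operatorname{supp}\mu$ for which this sum is strictly negative. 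The mechanism is that $|t|^p$ fails to be smooth at $t=0$ when $p\notin 2\N$ (which is exactly where positive definiteness of $|t|^p$ on the whole sphere breaks down), and a point $z\in(\operatorname{supp}\mu)^\circ$ supplies enough local freedom to probe that singularity against a point of $z^\perp$.

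Concretely, suppose $y_0\in\operatorname{supp}\mu\cap z^\perp$. Fix an open cap $U\ni z$ with $U\subset\operatorname{supp}\mu$ and choose a unit vector $v\in z^\perp$ with $\kappa:=\langle v,y_0\rangle\neq 0$ (possible because $y_0\perp z$ forces $y_0\neq\pm z$, so $z^\perp\not\subset y_0^\perp$). Along the geodesic $\gamma(t)=\cos t\,z+\sin t\,v$ one has the exact identities $\langle\gamma(s),\gamma(t)\rangle=\cos(s-t)$ and $\langle\gamma(t),y_0\rangle=\kappa\sin t$. Choose the integer $K=\lfloor p/2\rfloor+1$, so that $2K>p$ and, crucially, $4K>2p$. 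For small $\tau>0$ take the $2K+2$ points $\gamma(-K\tau),\dots,\gamma(K\tau)$ (all in $U$ once $\tau$ is small) together with $y_0$, carrying weights $c_m=(-1)^{K-m}\binom{2K}{K-m}$ on $\gamma(m\tau)$ and weight $-\lambda$ on $y_0$, with $\lambda\in\R$ to be chosen. Since the $c_m$ annihilate polynomials of degree $<2K$ in $m$, one gets $\sum_{m,m'}c_mc_{m'}P(m-m')=0$ for every polynomial $P$ of degree $<4K$. I would then split the quadratic form $Q(\tau,\lambda):=\sum c_ic_j|\langle p_i,p_j\rangle|^p$ into three blocks. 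The $\gamma$–$\gamma$ block equals $\sum_{m,m'}c_mc_{m'}\cos^p((m-m')\tau)$ (inner products are positive for small $\tau$); since $\cos^p$ is even and real-analytic near $0$, it agrees with a polynomial of degree $4K-2$ up to an $O(\tau^{4K})$ error, and the polynomial part is annihilated, so this block is $O(\tau^{4K})$. The cross block is $-2\lambda|\kappa|^p\sum_m c_m|\sin(m\tau)|^p$, and writing $|\sin(m\tau)|^p=|m|^p\tau^p\,h(m\tau)$ with $h(s)=(\sin s/s)^p$ even and analytic near $0$ gives $\sum_m c_m|\sin(m\tau)|^p=D\tau^p+O(\tau^{p+2})$, where $D=\sum_m c_m|m|^p$. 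The diagonal block contributes $\lambda^2$. Altogether
\[
Q(\tau,\lambda)=\lambda^2-2\lambda|\kappa|^p\big(D\tau^p+O(\tau^{p+2})\big)+O(\tau^{4K}).
\]
Minimizing this parabola in $\lambda$ (at $\lambda^*=|\kappa|^p(D\tau^p+O(\tau^{p+2}))$) gives $Q(\tau,\lambda^*)=-|\kappa|^{2p}D^2\tau^{2p}+O(\tau^{2p+2})+O(\tau^{4K})$. Provided $D\neq 0$, the leading term $-|\kappa|^{2p}D^2\tau^{2p}$ dominates — this is where $4K>2p$ enters — so $Q(\tau,\lambda^*)<0$ for all sufficiently small $\tau>0$, contradicting positive definiteness of $f$ on $\operatorname{supp}\mu$. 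Hence $\operatorname{supp}\mu\cap z^\perp=\emptyset$.

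The single arithmetic input is that $D=\pm\,\Delta^{2K}\big[\,|x|^p\,\big]\big|_{x=0}\neq 0$ whenever $p\notin 2\N$ and $2K>p$. I would verify this on the Fourier side: the centered $2K$-th difference has symbol $(2\sin(\xi/2))^{2K}$ (up to sign), while the tempered-distribution Fourier transform of $|x|^p$ is a nonzero, one-signed constant times $|\xi|^{-1-p}$ precisely because $p\notin 2\N$; pairing them represents $D$ as a positive constant times $\int_{\R}|\xi|^{-1-p}\sin^{2K}(\xi/2)\,d\xi$, an integral that converges (this needs $2K>p$) and is strictly positive, hence nonzero. I expect the main obstacle of the whole proof to be exactly this bookkeeping of orders — arranging that the harmless ``smooth'' $\gamma$–$\gamma$ block (size $\tau^{4K}$) is negligible against the decisive interaction with $y_0$ (size $\tau^{2p}$) — which is why a $2K$-th order finite difference with $2K>p$ is forced on us rather than a naive three-point configuration (the latter only reaches the range $p<1$), and why the non-degeneracy of $\Delta^{2K}|x|^p$ at the origin is indispensable.
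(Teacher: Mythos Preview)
Your argument is correct and follows the same architecture as the paper's proof: place $2K+1$ equispaced points on a geodesic through $z$, add the orthogonal point, load the geodesic points with centered $2K$-th difference weights so that the smooth $\gamma$--$\gamma$ block is $O(\tau^{4K})$, and then optimize the weight on $y_0$ so that the $\tau^{2p}$ cross contribution dominates and drives the form negative; the paper uses the identical configuration (with $v=y_0$, your $\kappa=1$) and the same order-counting $4K>2p$.

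The one substantive difference is how you certify $D=\sum_m c_m|m|^p\neq 0$. The paper observes that $p\mapsto D(p)$ is a linear combination of the $K$ exponentials $1^p,2^p,\dots,K^p$, hence has at most $K-1$ real zeros, and then identifies those zeros as $p=2,4,\dots,2K-2$ by an indirect trick (if $D(2j)\neq 0$ the same construction would produce a non--positive-semidefinite matrix for the positive definite kernel $t^{2j}$). Your Fourier argument is a legitimate alternative and arguably more direct, but note that the Parseval step is not literally a pairing of tempered distributions with Schwartz functions, since $(2\sin(\xi/2))^{2K}$ does not decay; the clean way to make it rigorous is to use the integral representation $|x|^p=A_{p,n}\int_0^\infty \xi^{-1-p}\bigl(\cos(x\xi)-\text{(Taylor jet of order }2n\text{)}\bigr)\,d\xi$ for $2n<p<2n+2$, sum against $c_m$, and note that the polynomial jet is annihilated because $2n<2K$, leaving exactly your convergent one-signed integral.
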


\begin{proposition}\label{prop:II}
Let the same  conditions as in Proposition~\ref{prop:I} hold. Then for $z \in \big( \operatorname{supp} \mu \big)^\circ$, $$\operatorname{supp} \mu  \cap  z^\perp  \neq \emptyset.$$
\end{proposition}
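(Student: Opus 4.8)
The strategy is a proof by contradiction against Proposition~\ref{prop:I}: I will assume $z\in(\supp\mu)^\circ$ and $\supp\mu\cap z^\perp=\emptyset$ and deduce that $\supp\mu\cap z^\perp\neq\emptyset$. First a normalization. Since $\supp\mu$ is compact and disjoint from the closed equator $z^\perp$, there is $\delta\in(0,1)$ with $|\langle z,y\rangle|\ge\delta$ for all $y\in\supp\mu$, so $\supp\mu$ lies in the union of the two opposite caps $\{\langle z,\cdot\rangle\ge\delta\}$ and $\{\langle z,\cdot\rangle\le-\delta\}$. Because $f(t)=|t|^p$ is even, reflecting through the origin ($y\mapsto-y$) the part of $\mu$ carried by $\{\langle z,\cdot\rangle<0\}$ produces a measure $\tilde\mu$ with $F_{\tilde\mu}\equiv F_\mu$ and $I_f(\tilde\mu)=I_f(\mu)$; hence $\tilde\mu$ is again a minimizer, $z$ is still interior to $\supp\tilde\mu$ (the support is unchanged near $z$), and now $\supp\tilde\mu\subseteq \overline C:=\{y:\langle z,y\rangle\ge\delta\}$, a single closed cap of angular radius $\arccos\delta<\pi/2$. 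Replacing $\mu$ by $\tilde\mu$ we assume this henceforth; in particular $M:=I_f(\mu)=F_\mu(z)=\int\langle z,y\rangle^p\,d\mu(y)\ge\delta^p>0$.

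Next I would exploit that, away from the equator, the corner of $|t|^p$ at $0$ is invisible. On the open cap $C^*:=\{x:\langle z,x\rangle>\sqrt{1-\delta^2}\}$ one has $\langle x,y\rangle>0$ for every $y\in\overline C\supseteq\supp\mu$ (bounded below by a positive constant on each compact subset of $C^*$), so $(x,y)\mapsto|\langle x,y\rangle|^p=\langle x,y\rangle^p$ is jointly real-analytic there and, by a standard differentiation-under-the-integral argument, $F_\mu$ is real-analytic on $C^*$. Since $z\in(\supp\mu)^\circ$, Lemma~\ref{lem:const_pot} gives $F_\mu\equiv M$ on a neighborhood of $z$, hence on a nonempty open subset of $C^*$; the identity theorem for real-analytic functions on the connected set $C^*$ then yields $F_\mu\equiv M$ on all of $C^*$. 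I would then pass to the homogeneous analytic continuation. On the open convex cone $\Gamma:=\{\xi\in\R^d:\langle\xi,y\rangle>0\text{ for all }y\in\supp\mu\}$ — nonempty since $z\in\Gamma$, connected as an intersection of half-spaces, and containing the open cone over $C^*$ — the function $\Phi(\xi):=\int_{\S^{d-1}}\langle\xi,y\rangle^{p}\,d\mu(y)$ is real-analytic and coincides with $M|\xi|^{p}$ on that subcone; as $M|\xi|^{p}$ is itself real-analytic on $\Gamma$ (because $0\notin\Gamma$), it would follow that
\[
\int_{\S^{d-1}}\langle\xi,y\rangle^{p}\,d\mu(y)=M\,|\xi|^{p}\qquad\text{for all }\xi\in\Gamma .
\]

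The contradiction would then come from iterating the Euclidean Laplacian $\Delta_\xi$. Using $\Delta_\xi\langle\xi,y\rangle^{q}=q(q-1)\langle\xi,y\rangle^{q-2}$ (as $|y|=1$) and $\Delta_\xi|\xi|^{q}=q(q+d-2)|\xi|^{q-2}$, and differentiating under the integral on compacta of $\Gamma$, the identity propagates to
\[
\int_{\S^{d-1}}\langle\xi,y\rangle^{p-2k}\,d\mu(y)=c_k\,M\,|\xi|^{p-2k}\quad(\xi\in\Gamma),\qquad c_k:=\prod_{j=0}^{k-1}\frac{p+d-2-2j}{p-1-2j},
\]
for every $k\ge0$, provided no exponent $p-2j$ equals $0$ or $1$ along the way — automatic when $p\notin\mathbb Z$. (If $p$ is an odd integer the recursion reaches exponent $1$, where $\Delta_\xi\langle\xi,y\rangle=0$ forces the corresponding constant to vanish, contradicting $\int\langle z,y\rangle\,d\mu\ge\delta>0$; so that case finishes immediately, and even integers are excluded by hypothesis.) Now evaluate at $\xi=z$, where $|\xi|=1$. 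A harmonic-series estimate for $\log|c_k|$ gives $|c_k|\to0$ as $k\to\infty$ (using $d\ge2$), so the right-hand side $c_kM$ tends to $0$. But the left-hand side tends to $+\infty$: since $z\in(\supp\mu)^\circ$, an open spherical annulus around $z$ lies in $\supp\mu$, so $\mu(\{y:\langle z,y\rangle\le 1-\eta\})>0$ for some $\eta>0$, and on that set $\langle z,y\rangle^{p-2k}\ge(1-\eta)^{p-2k}\to\infty$. This is impossible, so $\supp\mu\cap z^\perp\neq\emptyset$.

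The step I expect to be the crux is the passage from "$F_\mu$ constant on the small cap $C^*$" to "$\Phi\equiv M|\xi|^{p}$ on the whole dual cone $\Gamma$": one must verify that $\Gamma$ is nonempty and genuinely contains the cone over $C^*$ (this is exactly the role of the reflection normalization — without it $\supp\mu$ could occupy antipodal caps and $\Gamma$ would be empty), and that $F_\mu$ is truly real-analytic, not merely $C^\infty$, on these sets. A minor but unavoidable nuisance is the bookkeeping at integer values of $p$, where the powers collapse to polynomials and the Laplacian recursion degenerates; those are handled separately as indicated and are in fact shorter.
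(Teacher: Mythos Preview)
Your argument is correct and takes a genuinely different route from the paper.  Both proofs begin with the same reflection normalization (pushing $\mu$ into a single closed cap $\{y:\langle z,y\rangle\ge\delta\}$) and both use that $F_\mu$ is constant near $z$ by Lemma~\ref{lem:const_pot}.  The divergence is in how the contradiction is extracted.

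\emph{The paper} works locally on the sphere: for $p\in(2k-1,2k)\cup(2k,2k+1]$ it constructs an order-$(2k{+}2)$ operator $D^{(k)}$ out of the Laplace--Beltrami operator,
\[
D^{(k)}=\Delta\,\big(\Delta+p(p{+}d{-}1)\big)\cdots\Big(\Delta+(p{+}d{-}2k{+}1)\!\!\prod_{j=0}^{2k-2}(p{-}j)\Big),
\]
engineered so that $D^{(k)}_x\langle x,y\rangle^{p}$ has a strict sign whenever $\langle x,y\rangle>0$, while $D^{(k)}$ kills constants; applying $D^{(k)}$ to $F_{\mu_z}$ at $z$ then gives $0\neq 0$.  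This is explicit but requires separate sign analysis on the two half-intervals between consecutive even integers.

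\emph{You} instead lift homogeneously to the dual cone $\Gamma\subset\R^d$, use real-analytic continuation to promote the local identity $F_\mu\equiv M$ to $\Phi(\xi)=M|\xi|^{p}$ on all of $\Gamma$, and iterate the \emph{Euclidean} Laplacian.  This yields the clean scalar recursion $\int\langle z,y\rangle^{p-2k}\,d\mu=c_kM$ with $c_k=\prod_{j=0}^{k-1}\frac{p+d-2-2j}{p-1-2j}$, and the contradiction comes from the asymptotics $|c_k|\asymp k^{-(d-1)/2}\to 0$ (here is where $d\ge 2$ enters).  Your method is more conceptual and sidesteps the sign bookkeeping entirely, at the cost of the analytic-continuation step to $\Gamma$---which you correctly flag as the crux and justify (convexity, hence connectedness, of $\Gamma$; the cone over a neighborhood of $z$ sits inside $\Gamma$ after reflection).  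Two minor simplifications: your intermediate passage through the cap $C^*$ is redundant, since one may continue directly from the cone over a neighborhood of $z$ to all of $\Gamma$; and your annulus argument for the left-hand side is stronger than needed---once $p-2k<0$ one already has $\langle z,y\rangle^{p-2k}\ge 1$ on $\supp\mu$, so the left side is $\ge 1$, which already clashes with $c_kM\to 0$.
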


Since these two statements are  clearly mutually exclusive whenever $\operatorname{supp} \mu$ is non-empty, their validity proves Theorem \ref{t:pframe}, i.e. that there are no interior points in the support of a minimizer. The remainder of this section is dedicated to the proof of these propositions. 

We now sketch the argument for the first proposition. In short, the idea of the proof is the following. Assume that there exists a point $y \in \operatorname{supp} \mu$ such that  $\langle y ,  z \rangle= 0$. We shall construct a finite set of points $X = \big\{x_i \big\}_{i=1}^N 
 \subset \operatorname{supp} \mu$, 
 such that the matrix $\big[ f (\langle x_i , x_j \rangle ) \big]_{i,j}$ 
 is not positive semidefinite, thus violating Lemma \ref{lem:positive_semidef}.  The set $X$ will consist of the points $z$, $y$, and a number (depending on $p$) of points, equidistantly spaced around $z$ on the great circle connecting $y$ and $z$. We now make this precise.

\begin{proof}[The proof of Proposition \ref{prop:I}]  
We prove that if $z$ is an interior point of  a minimizer's support, then the orthogonal hyperplane $z^\perp$ does not intersect the support of $\mu$. 

Fix $z$ in the interior of $\operatorname{supp} (\mu)$ and let $y \in \mathbb S^{d-1}$  be any point such that $\langle y, z\rangle = 0$. Setting $k \in \mathbb{N}$ so that $2k-2 < p < 2k$, we shall construct a set $\{ x_0, \dots, x_{N-1} \}$ of $N = 2k + 2$ points, all of which lie on the great circle connecting $z$ and $y$. The points $x_0,\dots, x_{2k}$ are chosen in such a way that the  angle between $x_j$ and $z$ is $ (j-k)\varepsilon$ for some small $\varepsilon >0$. Thus $x_k = z$, and the points $x_0$ and $x_{2k}$ make angles $-k\varepsilon$ and $k\varepsilon$ with $z$, respectively. Observe that when $\varepsilon$ is small enough, all of these points $x_0,\dots,x_{2k}$ belong to  $\operatorname{supp} (\mu)$, since $z$ is an interior point. Finally, we set $ x_{2k+1} = y$. Then the angle between  $ x_{2k+1} = y$ and $x_j$, $j=0,\dots, 2k$, is $\frac{\pi}2 - (j-k) \varepsilon$. In order to apply Lemma \ref{lem:positive_semidef}, we consider the matrix $ \displaystyle{A  =  \big[ f ( \langle x_i , x_j \rangle)  \big]_{i,j=0}^{2k+1}}$.

We will show  that the matrix $A$ is not positive semidefinite. To this end, we first  construct an auxiliary vector  $v \in \mathbb{R}^{2k+1} \setminus \{0 \}$ such that for $m \in \{ 0, 1, ..., 2k-1 \}$,
\begin{equation}\label{e.4.10}
 \sum_{j=0}^{2k} j^m v_j = 0,
 \end{equation}
i.e.\  this vector must be in the (right) kernel of the Vandermonde matrix
$$ \begin{pmatrix}
1 & 1 & 1 & 1 & \cdots & 1 \\
0 & 1 & 2 & 3 &\cdots & 2k \\
0 & 1 & 2^2 & 3^2 & \cdots & (2k)^2 \\
\vdots & \vdots & \vdots & \vdots & \ddots & \vdots \\
0 & 1 & 2^{2k-1} & 3^{2k-1} & \cdots & (2k)^{2k-1}
\end{pmatrix}.$$
We can take  the entries of $v$  to be 
\begin{equation}\label{e.4.11} 
 v_j = \prod_{\substack{i=0 \\ i \neq j}}^{2k} \frac{1}{j-i} =  \frac{(-1)^j}{(2k-j)! j!}.
\end{equation}
Such a vector can be seen to be in the kernel of the matrix above by use of the formula for the inverse of the square Vandermonde matrix (see Ex. 40 on page 38 of \cite{knuth}).

Consider a vector $u = [ \alpha v_0, \alpha v_1, ..., \alpha v_{2k}, \beta]^T \in \mathbb{R}^{2k+2}$, where $\alpha$, $\beta \in \mathbb R$.  Then we have
\begin{equation}\label{e.4.12}
\begin{aligned}
\left\langle  Au,u  \right\rangle & = \alpha^2 \left( \sum_{i,j=0}^{2k} v_i v_j f( \langle x_{i}, x_j \rangle) \right) + 2 \alpha \beta \left( \sum_{j=0}^{2k} v_j f( \langle x_{2k+1}, x_j \rangle) \right) + \beta^2.
\end{aligned}
\end{equation}
We shall show that the real numbers $\alpha$ and $\beta$ can be chosen in such a way that the expression above is negative, for $\varepsilon$ sufficiently small. 

Observe that for $ i$, $j =  0, \dots, 2k$ we have $$  f( \langle x_i, x_j \rangle) = \cos^p \big( (i-j) \varepsilon \big). $$
Since $\cos^p(t)$ is even, smooth near zero, and $\cos^p(0) = 1$, we can use its Taylor expansion to estimate the first term of \eqref{e.4.12} as follows 
\begin{equation}\label{e.4.13}
\begin{aligned}
\sum_{i,j=0}^{2k} v_i v_j f( \langle x_{i}, x_j \rangle) & = \sum_{i,j=0}^{2k} v_i v_j \cos^p \big( (i - j) \varepsilon \big) \\
& = \sum_{i,j=0}^{2k} v_i v_j \left( 1 + \sum_{m=1}^{2k-1} a_m \varepsilon^{2m} (i-j)^{2m} + O(\varepsilon^{4k})  \right) \\
& = \left( \sum_{j=0}^{2k}  v_j \right)\left( \sum_{i = 0}^{2k} v_i \right)  + \sum_{m=1}^{2k-1} a_m \varepsilon^{2m} \left( \sum_{i,j=0}^{2k} v_i v_j (i-j)^{2m} \right)  +  O(\varepsilon^{4k})  \\
& = \sum_{m=1}^{2k-1} a_m \varepsilon^{2m} \sum_{i,j=0}^{2k} v_i v_j \sum_{l=0}^{2m} \binom{2m}{l} i^l j^{2m-l}  +  O(\varepsilon^{4k})  \\
& = \sum_{m=1}^{2k-1} a_m \varepsilon^{2m} \sum_{l=0}^{2m} \binom{2m}{l} \left( \sum_{i=0}^{2k} v_i   i^l \right) \left( \sum_{j=0}^{2k} v_j  j^{2m-l} \right)  +  O(\varepsilon^{4k})  \\
& =  O(\varepsilon^{4k}),  \\
\end{aligned}
\end{equation}
where we have used the fact that   for all  values of $l =0, 1, \dots, 2m$, either $l \leq 2k-1$ or $2m-l \leq 2k-1$.\\

We now turn to the second term of \eqref{e.4.12}. Observe that for $j=0, \dots, 2k$ we have
\begin{equation}\label{e.4.14}
 f( \langle x_{2k+1}, x_j \rangle) = f( \langle y , x_j \rangle)  =  \cos^p \bigg(\frac{\pi}2 - (j-k) \varepsilon \bigg)      =  \big| \sin \big( (j-k) \varepsilon \big)     \big|^p.
\end{equation}
We then find that 
\begin{equation}\label{e.4.15}
\begin{aligned}
\sum_{j=0}^{2k} v_j f( \langle y, x_j \rangle) & = \sum_{j=0}^{2k} v_j \sin^p \left(  |k-j| \varepsilon \right)   = \sum_{j=0}^{2k} v_j \left(  { |k-j| \varepsilon} + O( \varepsilon^3) \right)^p \\
& = \sum_{j=0}^{2k} v_j \left(   |k-j| \varepsilon \right)^p \left(1+ O( \varepsilon^2) \right)^p  =   \varepsilon^p \sum_{j=0}^{2k}  v_j |k-j|^p + O( \varepsilon^{p + 2}) .
\end{aligned}
\end{equation}
We now analyze the coefficient  of $\varepsilon^p$ in  the above expression using \eqref{e.4.11} 
\begin{equation*}
\begin{aligned}
\sum_{j=0}^{2k}  v_j |k-j|^p & = 
  \sum_{j=0}^{2k} (-1)^j \frac{|k-j|^p}{(2k-j)! j!}   = 2 \sum_{j=0}^{k-1} (-1)^j \frac{(k-j)^p}{(2k-j)! j!}. \\
\end{aligned}
\end{equation*}
\vskip2mm

\noindent Since the above is a sum of $k$ exponential functions of $p$, we know that  $ \sum_{j=0}^{k-1} (-1)^j \frac{(k-j)^p}{(2k-j)! j!}$ has at most $k-1$ zeros, see e.g. Ex. 75 from \cite[pg. 46]{polyaszego}.  We will show that these zeros are exhausted by the even integer values $p=2, 4, \dots, 2k-2$.  Indeed, assume  indirectly  that
$$ 2 \sum_{j=0}^{k-1} (-1)^j \frac{(k-j)^p}{(2k-j)! j!} = b \neq 0$$
for some even integer $0 < p \leq 2k-2$. Then according to \eqref{e.4.12}, \eqref{e.4.13}, and \eqref{e.4.15} we have
\begin{equation*}
\begin{aligned}
\left\langle  Au, u  \right\rangle & = \alpha^2 O(\varepsilon^{4k}) + 2 \alpha \beta \left( b \varepsilon^p + O( \varepsilon^{p + 2}) \right) + \beta^2.
\end{aligned}
\end{equation*}
Since $p<2k$, for $\varepsilon$ sufficiently small, the discriminant of this quadratic form is positive, hence we can choose $\alpha$ and $\beta$ so that $\langle Au, u \rangle < 0$.  However, since $f(t) = |t|^p$ is a positive definite function on $\S^{d-1}$ for even integer  $p$, this is a contradiction, as the matrix $A$ must be positive semidefinite for any collection $\{ x_i\}$. Therefore $$ 2 \sum_{j=0}^{k-1} (-1)^j \frac{(k-j)^p}{(2k-j)! j!} = 0$$
for all $p \in \{2, 4, ..., 2k-2 \}$. Since there are at most $k-1$ zeros of this function, we then know that
$$  b_p :=  2 \sum_{j=0}^{k-1} (-1)^j \frac{(k-j)^p}{(2k-j)! j!} \neq 0$$
for all other  values of $p$. Let $p \in (0, 2k) \setminus \{2, 4, ..., 2k-2\}$. Then 

\begin{equation*}
\begin{aligned}
\left\langle   Au, u  \right\rangle & = \alpha^2 O(\varepsilon^{4k}) + 2 \alpha \beta \left( b_p \varepsilon^p + O( \varepsilon^{p + 2}) \right) + \beta^2,
\end{aligned}
\end{equation*}
and by the previous argument, for $\varepsilon$ sufficiently small, we could choose $\alpha$ and $\beta$ so that $\langle Au, u \rangle < 0$, i.e. $A$ is not positive definite. Thus, according to Lemma~\ref{lem:positive_semidef},  $\{ x_0, x_1, ..., x_{2k}, y \}$ is not a subset of $ \operatorname{supp} \mu$. Since, by assumption, for small $\varepsilon >0$  the points $x_0, x_1, \dots , x_{2k}$ all lie in a neighborhood of $z$ and hence in $\operatorname{supp} \mu$, this implies that $y \not\in \operatorname{supp} \mu $ and so  $\operatorname{supp} \mu  \cap  z^\perp  = \emptyset$. \end{proof}

\noindent  {{We would like to make the following remark.}} 
Observe that for   $p\not\in 2 \N$, the number of points used to disprove positive definiteness of $f(t) = |t|^p$ in the  argument above is of the order $p$. A restriction of this type is actually  necessary.  Indeed, according to the result of Fitzgerald and Horn \cite{fitzgeraldFract1977}, for any positive definite matrix $A = \big[ a_{ij} \big]_{i,j=1}^N$ with non-negative entries $a_{ij} \ge 0$, its Hadamard powers $ A^{(\alpha)} =  \big[ a_{ij}^\alpha \big]_{i,j=1}^N$ are also positive definite when $\alpha \ge N-2 $. Let $G =  \big[ \langle x_i , x_j \rangle \big]_{i,j=1}^N$ be the Gram matrix of the set $X = \{ x_1,\dots, x_N \} \subset \mathbb S^{d-1}$. Since the matrix $G^{(2)} =  \big[ |\langle x_i , x_j \rangle|^2  \big]_{i,j=1}^N$ is positive definite and has non-negative entries, we have that the matrix $$ G^{(p)}  =  \big[ |\langle x_i , x_j \rangle|^p  \big]_{i,j=1}^N = \big( G^{(2)} \big)^{(p/2)} $$ is positive definite whenever $p/2 \ge N-2$. Therefore, to obtain a non-positive definite matrix  $G^{(p)}$, we must take $N \ge 2+ p/2$ points.

We now complete the proof of Theorem \ref{t:pframe}.

\begin{proof}[The proof of Proposition \ref{prop:II}] 
Suppose a neighborhood of a point $z \in \mathbb{S}^{d-1}$ is contained in the support of $\mu$. We shall demonstrate that $\operatorname{supp}  \mu$ must intersect the hyperplane $z^\perp$. 

Let us assume the contrary, i.e. $\operatorname{supp} \mu \cap z^\perp = \emptyset$. We may move all the mass of $\mu$ to the hemisphere centered at $z$ by defining a new measure  $\mu_z \in \mathcal{P} (\S^{d-1})$:
$$ \mu_z( E) = \begin{cases} 
\mu( -E \cup E), & \textup{if } E \subseteq \{ x\in \mathbb S^{d-1} :\, \langle z, x\rangle >0 \},  \\
\mu(E), & \textup{if } E \subseteq z^\perp,  \\
0,  & \textup{if } E \subseteq  \{ x\in \mathbb S^{d-1} :\, \langle z, x\rangle < 0 \}.
\end{cases}$$
Since $f(\langle z, y \rangle) = f( \langle z, -y \rangle)$ for all $y \in \mathbb{S}^{d-1}$, this does not change the energy, i.e.  $ I_{f}( \mu_z) = I_{f}(\mu)$, so that $\mu_z$ is also  a minimizer.\\

Since  $\displaystyle{\operatorname{supp} \mu \cap z^\perp = \emptyset}$,   we also have that $\displaystyle{\operatorname{supp} \mu_z   \cap z^\perp = \emptyset}$, i.e.  $\operatorname{supp} \mu_z   \subset \{ x\in \mathbb S^{d-1} :\, \langle z, x\rangle >0 \}$. Compactness of   the support of $\mu_{z}$ then implies that it is separated from $z^\perp$, i.e.\ for some $\delta >0$  we have  $\langle y , z \rangle > \delta$ for each $y \in \operatorname{supp} \mu_z  $. Let us choose an open neighborhood $U_z$ of $z$, small enough so that $U_z \subset \operatorname{supp} \mu_z  $ and so that for each $x\in U_z $ and each $y \in \operatorname{supp} \mu_z$, $\langle y , x \rangle > \delta >0$.

We can now write the potential \eqref{e.4.6} of $\mu_z$ at the point $x \in U_z$ as 
\begin{equation}
 F_{\mu_z}(x) = \int_{ \mathbb{S}^{d-1} } | \langle x, y \rangle |^p \,  d \mu_z(y) = \int_{ \operatorname{supp}  \mu_z  }  \langle x, y \rangle^p  \, d \mu_z(y).
 \end{equation}
The discussion above implies that the last expression is well-defined for all $p>0$.  According to Lemma~\ref{lem:const_pot}, the potential   $F_{\mu_z} (x) $ is constant on $U_z \subset  \operatorname{supp} \mu_z $.\\

When $p$ is an odd integer, the proof can be finished very quickly. Indeed, in this case the expression  $$ g(x) = \int_{ \operatorname{supp} \mu_z  }  \langle x, y \rangle^p  \, d \mu_z(y)$$  is well defined for each $x\in \mathbb S^{d-1}$ and yields an  analytic function on the sphere (actually, a polynomial). Hence, being constant on an open set, implies that it is is constant on all of $\mathbb S^{d-1}$, which is not possible since, obviously, $ g(-z) = - g(z) = - F_{\mu_z} (z) = - I_f (\mu_z) \neq 0$.  Compare this argument to Theorem \ref{Analytic_supp}.   \\

We now will present an approach which works for all $p\in \mathbb R_+ \setminus 2 \mathbb N$. Assume that there exists a differential operator $D$ acting on functions on the sphere with the following two properties:
\begin{enumerate}[(i)]
\item\label{da} $D$ locally annihilates constants, i.e.\ if $u(x)$ is constant on some open set $\Omega$, then $D_x u = 0$ on $\Omega$;
\item\label{db} $D_x  \left(  \langle x, y \rangle^p \right) < 0$  for all  $x\in U_z $ and  $y \in \operatorname{supp} \mu_z  $. 
\end{enumerate}

Existence of such an operator would finish the proof since we would then have for each $x\in U_z$
\begin{equation}\label{e.4.17}
0 = D_x F_{\mu_z } (x) =  \int_{ \operatorname{supp}(\mu_z )} D_x  \left(  \langle x, y \rangle^p  \right) \, d \mu_z(y) <0,
\end{equation}
which is a contradiction. Note that that switching to $D_x  \left(  \langle x, y \rangle^p \right) > 0$ in condition (\ref{db}) does not affect the proof. \\

We now construct such an operator $D$. Let $\Delta$ denote the Laplace--Beltrami operator on $\mathbb S^{d-1}$. Writing it in the standard spherical coordinates $\theta_1,\dots,\theta_{d}$ one obtains (see, e.g., equation (2.2.4) in \cite{KMR})
\begin{equation}\label{e.4.18}
 \Delta = \sum_{j=1}^{d-1} \frac{1}{q_j (\sin \theta_j )^{d-1-j} } \frac{\partial}{\partial \theta_j}  \left(  (\sin \theta_j )^{d-1-j}  \frac{\partial}{\partial \theta_j}  \right),
 \end{equation}
  where $q_1 =1 $ and $q_j = (\sin \theta_1 \dots \sin \theta_{j-1})^2$ for $j>1$.

For a fixed $y\in \mathbb S^{d-1}$, choose the coordinates so that $\cos \theta_1 = \langle y, x\rangle$. Then $ \langle y, x\rangle^p = \cos^p \theta_1$, effectively leaving just one term in the formula above, and a direct computation shows that 
 \begin{equation}\label{e.4.19}
\Delta_x   \left(  \langle x, y \rangle^p \right)  = p (p-1) \langle x, y \rangle^{p-2} - p (p+d-1) \langle x, y \rangle^p.
\end{equation}
Observe that if $p\in (0,1]$, then the operator $\Delta_x$ satisfies conditions (\ref{da}) and (\ref{db}), hence completing the proof for this range of $p$.\\

Now consider the operator 
$D = \Delta \, \big( \Delta +  p (p+d -1) \big)$.
It is easy to see that 
\begin{equation}\label{e.4.20}
\begin{aligned}
    \Delta_x \, \big( \Delta_x +  p (p+d -1) \big)   \left(  \langle x, y \rangle^p \right)  & = p (p-1)  \Delta_x   \left(  \langle x, y \rangle^{p-2} \right)  \\ 
 & = p(p-1) (p-2) \langle x, y \rangle^{p-4} \cdot \big(  (p-3)  -  (p+d-3) \langle x, y \rangle^2\big). 
\end{aligned}
\end{equation} 

If $p\in (2, 3]$, then $p-3 \le 0$ and $p+d -3 > d-1 \ge 0$, so the expression above is strictly negative. Hence this operator satisfies conditions (\ref{da}) and (\ref{db}) for $2<p\le 3$. 

Moreover, if $p \in (1,2)$, the expression above is strictly positive. Indeed, the function $g_p(t) = (p-3) - (p+d-3) t $ is monotone on $[0,1]$ with $g_p (0) = p-3 < 0$ and $g_p (1) = - d < 0$. Therefore, condition (\ref{db}) holds with an opposite inequality sign, so the case $1<p<2$ is also covered.\\

It is now clear how to iterate this process. 
  Define now the operator $D^{(0)} = \Delta$, $D^{(1)} = \Delta \, \big( \Delta +  p (p+d -1) \big)$, and, more generally, for $k\in \mathbb N$, define the differential operator of order $2k+2$ 
\begin{equation}\label{e.4.21}
D^{(k)}  = \Delta  \left( \Delta + (p+d-2k+1) \prod_{j=0}^{2k-2} (p-j) \right) \cdots \Bigg( \Delta + p (p-1) (p-2) ( p+d-3) \Bigg) \big( \Delta  + p ( p+d-1) \big).
\end{equation} 
Let $p \in \mathbb R_+\setminus 2 \mathbb N$ and choose $k \in \mathbb N_0$ so that $2k-1 <p \le  2k+1$. An iterative computation  shows that 
\begin{equation}\label{e.4.22}
\begin{aligned}
 D^{(k)}_x   \left(  \langle x, y \rangle^p \right) & = \left( \prod_{j=0}^{2k+1} (p-j)\right) \,  \langle x, y \rangle^{p-2k-2} - \left( \prod_{j=0}^{2k} (p-j)\right) (p + d -  2k -1)\,  \langle x, y \rangle^{p-2k} \\ 
 & = \left( \prod_{j=0}^{2k} (p-j)\right) \,  \langle x, y \rangle^{p-2k-2} \cdot \left(  (p- 2k-1)  - (p + d -  2k -1 )\,  \langle x, y \rangle^{2} \right) .
 \end{aligned}
\end{equation} 

For $p\in (2k, 2k+1]$, the expression above is strictly negative, since  $p- 2k -1 \le 0 $ and $p+ d- 2k -1 > d-1 \ge 0$. 

At the same time, for $p \in (2k-1, 2k)$, this expression is strictly positive, because  $\prod_{j=0}^{2k} (p-j) < 0 $ and the monotone function $g_p (t) =  (p- 2k-1)  - (p + d -  2k -1 )t $ takes values $g_p (0) = p - 2k-1 <0$ and $g_p (1) = - d <0$. Thus, operator $D^{(k)}$ allows us to prove Propostion \ref{prop:II} for $p$ in the range $ (2k-1,2k) \cup (2k, 2k+1]$.
\end{proof}

We suspect that an analog of Theorem \ref{t:pframe} also holds for the Fejes T\'oth conjecture \cite{FT} mentioned in the introduction. Recall that this conjecture (its continuous version) deals with the energy $I_f$ with potential $f(t) = \arcsin |t|$ and speculates that the discrete measure uniformly concentrated on the elements of an orthonormal basis minimizes $I_f$. If   the conjecture is true, not all the  minimizers of this energy are discrete. For example, as observed  in \cite{BM}, on $\S^3$,  normalized uniform $1$-dimensional Hausdorff measure on two orthogonal copies of $\S^1$, i.e. on the set $$ \{ (x_1, x_2, 0,0 ):\, x_1^2 +x_2^2 =1 \} \cup  \{ (0,0,x_3, x_4  ):\, x_3^2 +x_4^2 =1 \}, $$ would also yield a minimizer. This effect is related to the fact that, while the kernel $f(t) = \arcsin |t|$ is  {\em{not}} positive definite on $\S^{d-1}$ with $d\ge 3$, it is indeed positive definite on $\S^1$, i.e. the uniform measure is a minimizer on the circle. Thus, assuming the conjecture, this energy does have non-discrete minimizers.

\section{Minimizers of  energies with analytic kernels}\label{sec:analytic}
We can also prove a statement analogous to Theorem \ref{t:pframe} for  a wide class of energies -- namely, those with analytic potentials. 

\begin{theorem}\label{Analytic_supp}
Assume that  $f$ is a real-analytic  function on $[-1,1]$, such that $\sigma $ is not a minimizer of $I_f$, i.e. $f$ is not (up to an additive constant) positive definite on $\mathbb{S}^{d-1}$. Let  $\mu$ be a minimizer of $I_{f}$, then $(\operatorname{supp}(\mu))^{\circ} = \emptyset$. Moreover, when $d =2$, then $\operatorname{supp}(\mu)$ must be discrete
\end{theorem}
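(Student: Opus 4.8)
The plan is to reduce the theorem to the rigidity of real-analytic functions, the key point being that the potential $F_\mu(x) = \int_{\S^{d-1}} f(\langle x,y\rangle)\,d\mu(y)$ of a minimizer $\mu$ is itself real-analytic on $\S^{d-1}$. Granting this, the argument runs as follows. By Lemma~\ref{lem:const_pot}, $F_\mu \equiv I_f(\mu)$ on $\operatorname{supp}\mu$. If $(\operatorname{supp}\mu)^\circ \neq \emptyset$, then the real-analytic function $F_\mu - I_f(\mu)$ vanishes on a nonempty open subset of the connected real-analytic manifold $\S^{d-1}$, hence vanishes identically by the identity principle. Integrating the resulting identity $F_\mu \equiv I_f(\mu)$ against $\sigma$, and using Fubini together with the rotational-invariance identity $\int_{\S^{d-1}} f(\langle x,y\rangle)\,d\sigma(x) = I_f(\sigma)$ (which holds for every fixed $y$), I would obtain $I_f(\mu) = \int_{\S^{d-1}} F_\mu\,d\sigma = I_f(\sigma)$; since $\mu$ is a minimizer, this makes $\sigma$ a minimizer too, contradicting the hypothesis (equivalently, contradicting Corollary~\ref{cor:dich}). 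Hence $(\operatorname{supp}\mu)^\circ = \emptyset$. For $d=2$ I would sharpen this: if $\operatorname{supp}\mu$ had an accumulation point then, being closed, it would contain one, so the real-analytic function $F_\mu - I_f(\mu)$ on $\S^1$ would vanish on a set with an accumulation point and hence identically, by the one-variable identity theorem --- the same contradiction. Thus no point of $\S^1$ is an accumulation point of $\operatorname{supp}\mu$, and as a closed subset of the compact space $\S^1$ it is therefore finite.

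It remains to prove that $F_\mu$ is real-analytic, which I expect to be the only delicate step. Since $f$ is real-analytic on the \emph{compact} interval $[-1,1]$, it extends holomorphically to $\{z\in\C : \dist(z,[-1,1]) < \rho\}$ for some $\rho>0$. First I would fix $x_0\in\S^{d-1}$ and a real-analytic local parametrization $t\mapsto x(t)$ of the sphere near $x_0$, with $t$ in a neighborhood of $0$ in $\R^{d-1}$; since the parametrization does not depend on $y$, the coordinate functions $x_i(t)$ extend holomorphically to a common complex polydisc $P_0$, and hence so does $\langle x(t), y\rangle = \sum_i x_i(t)y_i$, with the uniform (in $y\in\S^{d-1}$) property that $\langle x(t), y\rangle$ stays within $\rho$ of $[-1,1]$ once $P_0$ is shrunk to a small enough polydisc $P$. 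Consequently $t\mapsto f(\langle x(t), y\rangle)$ is holomorphic on $P$ and bounded there, uniformly in $y$, while for each $t\in P$ it is a continuous, hence $\mu$-integrable, function of $y$. Holomorphy under the integral sign --- Morera's theorem plus Fubini, or the Cauchy integral formula applied in each variable --- then gives that $t\mapsto F_\mu(x(t)) = \int_{\S^{d-1}} f(\langle x(t), y\rangle)\,d\mu(y)$ is holomorphic on $P$; restricting to real $t$ shows $F_\mu$ is real-analytic near $x_0$, and $x_0$ was arbitrary.

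The main obstacle is the bookkeeping in the previous paragraph: ensuring that the complex extension of $x\mapsto\langle x,y\rangle$ and the bounds on $f$ can be arranged on a single polydisc $P$ \emph{independent of} $y$ --- this is exactly where compactness of $[-1,1]$ (a uniform $\rho$) and of $\S^{d-1}$ (uniformly bounded derivatives of the $x_i$) are used. Once real-analyticity of $F_\mu$ is in hand, the rest is a short application of the identity principle and a Fubini computation; in the $d=2$ case one could alternatively argue through the Fourier coefficients of $F_\mu$ on $\S^1$, using that analyticity forces their exponential decay while vanishing of $F_\mu - I_f(\mu)$ on an infinite set forces them all to vanish.
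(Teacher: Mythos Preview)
Your proposal is correct and follows essentially the same route as the paper: establish that $F_\mu$ is real-analytic, invoke Lemma~\ref{lem:const_pot} and the identity principle to conclude $F_\mu$ is globally constant, then use Fubini and rotational invariance to deduce $I_f(\mu)=I_f(\sigma)$, a contradiction; the $d=2$ case is handled the same way via the one-variable identity theorem. The only difference is one of emphasis: the paper asserts real-analyticity of $F_\mu$ in one line (from real-analyticity of $f(\langle x,y\rangle)$ on $\S^{d-1}\times\S^{d-1}$), whereas you spell out the complex-extension and holomorphy-under-the-integral argument in detail.
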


\begin{proof}

Suppose, indirectly, that $(\operatorname{supp}(\mu))^{\circ} \neq \emptyset$. By Lemma \ref{lem:const_pot}, we know that the potential
$$F_{\mu}(x) = \int_{ \mathbb{S}^{d-1}} f( \langle x , y \rangle) d \mu(y)$$
is constant on $\operatorname{supp}(\mu)$. Since $f(\langle x, y \rangle)$ is real-analytic on $\mathbb{S}^{d-1} \times \mathbb{S}^{d-1}$, $F_{\mu}(x)$ is real-analytic on $\mathbb{S}^{d-1}$. Since $F_\mu$ is real-analytic and constant on an open set in $\mathbb{S}^{d-1}$, it is constant on all of $\mathbb{S}^{d-1}$ \cite[Lemma 2.4]{quinto97}. In addition, $F_\sigma (x) = I_f (\sigma)$ is constant on $\S^{d-1}$ due to rotational invariance. We then obtain
\begin{align*}
I_f(\mu) & = \int_{ \mathbb{S}^{d-1}} \int_{\mathbb{S}^{d-1}} f( \langle x, y \rangle ) d \mu(y) d\mu(x) = \int_{ \mathbb{S}^{d-1}} F_{\mu}(x) d\mu(x)  = \int_{ \mathbb{S}^{d-1}} F_{\mu}(x) d\sigma(x) \\
& = \int_{ \mathbb{S}^{d-1}} \int_{\mathbb{S}^{d-1}} f( \langle x, y \rangle ) d \mu(y) d\sigma(x) = \int_{ \mathbb{S}^{d-1}} \int_{\mathbb{S}^{d-1}} f( \langle x, y \rangle ) d\sigma(x) d \mu(y) \\ 
&  = \int_{ \mathbb{S}^{d-1}} F_\sigma (x)  d \mu(y)   = I_f( \sigma).
\end{align*}
This is clearly a contradiction, as by the assumption, 
$I_f $ is not minimized by $\sigma$. Our first claim then follows.

For $\mathbb{S}^1$, we have that if $F_\mu$ is constant on a set $\{ z_1, z_2, ... \} \subset \mathbb{S}^1$ with an accumulation point, $F_\mu$ is constant on $\mathbb{S}^1$. The proof of our second claim then follows as above.
\end{proof}

If $\S^{d-1}$ is replaced with one of the projective spaces $\FP^{d-1}$ ($\F = \R$ or  $\C$, for instance) a similar result can be derived as above. In this case kernels $f$ are also functions of the cosine of the geodesic distance $\tau(x,y)=2|\langle x,y\rangle|^2-1$ under identification of points with unit vectors $x,y\in\F^{d}$; see \cite{BGMPV} for more details on energy integrals over these spaces. \\

In the spirit of Theorem \ref{Analytic_supp}, as well as  Corollary \ref{cor:dich}, it may be tempting to conjecture that if $f$ (not necessarily analytic) is not positive definite on $\S^{d-1}$ (up to constant), i.e $I_f(\mu)$ is {\em{not}} minimized by $\sigma$, then the support of any minimizer of $I_f$ must have empty interior. However, this is not true, as the following simple example shows. Assume that $f\in C[-1,1]$  is constant near $t=1$ and strictly decreasing otherwise, i.e. it satisfies for some fixed $\gamma  \in (0,1)$, $$ f(1) = f(\tau) = \min_{t\in[-1,1]} f(t) \,\, \textup{ for any }\,\, \tau \in [1- \gamma,1],$$ and $f(\tau) > f(1)$ for all $\tau \in [-1, 1- \gamma)$. It is then evident that for any $z \in\S^{d-1}$  $$ \min_{\mu \in \mathcal P (\S^{d-1}) } I_f (\mu ) = I_f \big(\delta_z \big) = f(1),$$
and  
$I_f (\sigma) > I_f (\delta_x)$, i.e. $\sigma $ is not a minimizer of $I_f$. Let $C(z, \alpha) = \{ x\in \S^{d-1}:\, \langle x,z \rangle >\alpha \} $ denote the {\em{spherical cap}} of ``height'' $\alpha $ centered at $z\in S^{d-1}$. Let $\nu$ be the  normalized uniform measure on $C(z,\alpha)$, i.e.  $$d\nu (x) = \frac{{\bf{1}}_{C(z,\alpha)} (x)}{\sigma \big( C(z,\alpha) \big)} d\sigma (x), $$ with $\alpha = 1 - \frac{\gamma}{4}$. Then for each $x,y \in C(z,\alpha)$, we have $\langle x,y \rangle  > 1-\gamma$, and hence $$ I_f (\nu ) = I_f \big(\delta_z\big) = f(1),$$ i.e. $\nu$ is also a minimizer of $I_f$, but its support has non-empty interior.\\

\section{Applications of the results to energies with polynomial kernels}\label{sec:poly}

We observe that the results of Sections \ref{sec:discrete} and \ref{sec:analytic} apply  if $f$ is a polynomial.  Indeed, Theorem \ref{Analytic_supp} is applicable since polynomials are analytic, while  the conditions of Theorem \ref{thm:discrete} hold because the Gegenbauer expansion  has only finitely many terms. We summarize these statements in the following corollary. 

\begin{corollary}\label{cor:poly}
Assume that $f$ is a polynomial whose Gegenbauer expansion is 
\begin{equation}
f(t) = \sum_{n=0}^m a_n C_n^\lambda (t). 
\end{equation}
\begin{enumerate}[(i)]
\item\label{poly1} There exists a discrete minimizer $\mu \in \mathcal P (\S^{d-1})$ with  
\begin{equation*}\label{eq:wd}
 \big| \supp \mu \big| \le 1 + \sum_{\substack{\{n:\ a_n >0,\ 1\leq n\leq m \} }}  \dim \mathcal H_n^d.  
 \end{equation*}
\item\label{poly2} If, moreover, $\sigma$ is not a minimizer of $I_f$ over $\mathcal P (\S^{d-1})$, i.e. there exists $n\ge 1 $ such that $a_n <0$, then the support of any minimizer of $I_f$ has empty interior.  For $\S^1$ the support is finite.
\end{enumerate}
\end{corollary}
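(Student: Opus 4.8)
The plan is to obtain both parts of the corollary as direct specializations of Theorem~\ref{thm:discrete} and Theorem~\ref{Analytic_supp}, the only preliminary step being to reconcile the normalization $f(t)=\sum_{n=0}^m a_n C_n^\lambda(t)$ used here with the one in~\eqref{eq:expansion}. Comparing the two expansions gives $\widehat f_n=\tfrac{\lambda}{n+\lambda}\,a_n$ (and, on $\S^1$, the analogous identity in the Chebyshev normalization), so $a_n$ and $\widehat f_n$ are positive scalar multiples of one another; in particular they share signs. Since $f$ is a polynomial of degree $m$, all but finitely many $a_n$ vanish, hence $N_+(f)=\{\,n\ge 0: a_n>0\,\}\subseteq\{0,1,\dots,m\}$ is finite and $|N_+(f)|<\infty$.

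For part~\eqref{poly1}, Theorem~\ref{thm:discrete} now applies with no further work (polynomials lie in $C[-1,1]$, and $N_+(f)$ is finite), producing a discrete minimizer $\mu$ with $|\supp\mu|\le\sum_{n\in N_+(f)\cup\{0\}}\dim\mathcal H_n^d$. Since $\dim\mathcal H_0^d=1$ and the index $0$ contributes exactly once to this sum whether or not $a_0>0$, the right-hand side equals $1+\sum_{\{n:\,a_n>0,\ 1\le n\le m\}}\dim\mathcal H_n^d$ (terms with $n>m$ are absent because $a_n=0$ there), which is precisely the stated bound.

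For part~\eqref{poly2}, suppose $a_n<0$ for some $n\ge 1$, equivalently $\widehat f_n<0$. Adding a constant to $f$ alters only the coefficient $\widehat f_0$, so $\widehat f_n<0$ persists under any additive shift, and hence $f$ fails Schoenberg's criterion (Proposition~\ref{p:pd}\eqref{part:schoen}) even up to an additive constant; by Corollary~\ref{cor:dich} (equivalently Proposition~\ref{p:pd}\eqref{part:energy}) the measure $\sigma$ is therefore not a minimizer of $I_f$. A polynomial is real-analytic on $[-1,1]$, so Theorem~\ref{Analytic_supp} applies and yields that the support of every minimizer of $I_f$ has empty interior, and that for $d=2$ the support is discrete. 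Finally, the support of a measure is closed and hence compact as a subset of $\S^1$, and a compact discrete set is finite, which gives the last assertion.

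I do not anticipate any genuine obstacle: the corollary amounts to checking that polynomial kernels meet the hypotheses of the two theorems, plus the elementary translation of the cardinality bound through the change of normalization. The single point worth spelling out explicitly — and which I would include — is the equivalence ``$a_n<0$ for some $n\ge 1$'' $\Longleftrightarrow$ ``$\sigma$ is not a minimizer of $I_f$'', which rests only on the sign agreement between $a_n$ and $\widehat f_n$ together with the fact that an additive constant perturbs exclusively the zeroth Gegenbauer coefficient.
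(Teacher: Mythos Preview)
Your proposal is correct and follows exactly the route the paper takes: it derives part~\eqref{poly1} from Theorem~\ref{thm:discrete} and part~\eqref{poly2} from Theorem~\ref{Analytic_supp}, with the paper itself offering only a one-sentence justification for each. The extra details you supply---the sign agreement between $a_n$ and $\widehat f_n$ under the two normalizations, the rewriting of the cardinality bound using $\dim\mathcal H_0^d=1$, and the passage from ``discrete'' to ``finite'' on $\S^1$ via compactness---are all accurate and fill in points the paper leaves implicit.
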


We observe that when $a_n > 0$  for $n=1,\dots,m$, i.e. $f$ is positive definite on $\S^{d-1}$  polynomial (up to constant), the statement of Theorem \ref{thm:discrete} (and hence also part \eqref{poly1} of the above corollary) is well known. In this case, the discrete minimizers $\mu = \sum \omega_{x_i} \delta_{x_i}$ are exactly {\em{weighted spherical $m$-designs}}, i.e.\ for any polynomial $P$ of degree at most $m$ we have  $$ \sum_{i} \omega_{x_i} P(x_i)  = \int_{\S^{d-1}} P(x)  d\sigma (x). $$

A certain well-known generalization of this fact can also be easily deduced from part \eqref{poly1} of Corollary \ref{cor:poly}. Let $M \subset \mathbb N_0$ with $0\in M$.  Call a set $\{x_i\}_{i=1}^k\subset \S^{d-1}$ with positive weights $\omega_{x_i}$ a {\it{weighted $\mathcal{M}$-design}} if for every $m\in \mathcal{M}$ and for every spherical harmonic $Y \in \mathcal H_m^d$ one has $$ \sum_{i} \omega_{x_i} Y(x_i)  = \int_{\S^{d-1}} Y(x)  d\sigma (x). $$ When $\mathcal{M}=\{0,1,\dots,m\}$, this definition coincides with the definition of an $m$-design. Such objects arise naturally for some configurations. For example, the $600$-cell, one of the six $4$-dimensional convex regular polytopes with vertices which form a $120$-point subset of $\S^3$, yields an exact cubature formula for spherical harmonics of degrees up to $19$, {\it{excluding}} degree $12$. In other words, it is an $\mathcal{M}$-design for $\mathcal{M} = \{ 0,1,\dots, 11\} \cup\{13, \dots, 19\}$. By taking $a_n >0$ only for $n\in \mathcal{M}$ and applying part \eqref{poly1} of Corollary \ref{cor:poly}, one easily concludes existence of weighted $\mathcal{M}$-designs on the sphere $\mathbb S^{d-1}$ of cardinality at most
$\sum_{n\in \mathcal{M}} \dim \mathcal H_n^d$. This statement is encompassed by more general results \cite{tchaka,rogo}

Theorem \ref{thm:discrete} and part \eqref{poly1} of Corollary \ref{cor:poly} vastly generalize these well-known statements, essentially showing that the addition of  any number of negative definite terms does not destroy the statement: discrete minimizers with the same cardinality still exist. 

Concerning part \eqref{poly2} of Corollary \ref{cor:poly}, it might be interesting to give some explicit examples of polynomials $f$ with at least one negative coefficient $a_n<0$ for $n\ge 1$, for which the minimizers of $I_f$ are not necessarily discrete. Finally, we mention that the case of energy optimization for polynomial potentials in $d=2$ is more approachable than in higher dimensions, due to the classical solution of the trigonometric moment problem \cite[Theorem 1.4]{shohattam}.

\section{Local minimizers of the $p$-frame energy with $p\in 2\N$ are global.}\label{sec:locglob}

Finally, we make an observation that for energies with positive definite kernels, including the $p$-frame energy with $p\in 2 \N$, every local minimizer is necessarily global. We consider local minima in a rather general sense. 
\begin{definition}
We say that a probability measure  $\xi \in \mathcal P (\S^{d-1})$ is a local minimizer of $I_f$ if for each $\mu \in \mathcal P ( \S^{d-1})$ and for any $\tau>0$ small enough (depending on $\mu$), $$ I_f \big( \xi \big) \le  I_f \big( (1-\tau) \xi + \tau \mu  \big). $$
\end{definition}

Observe that this definition is satisfied if $\xi$ is a local minimum with respect to many reasonable metrics on $\mathcal P (\S^{d-1})$, i.e. if there exists $\varepsilon >0$ such that $I_f (\xi ) \le I_f (\mu)$ whenever $ d( \xi, \mu ) <\varepsilon$, where $d(\xi, \mu)$,  represents, for example,  the $d_p$-Wasserstein distance, $p<\infty$, or the total variation distance between measures.  The following proposition provides a relation between the local and global minimizers. 

\begin{proposition}\label{p:locglob}
Let $ f\in C[-1,1]$ and let $\nu \in \mathcal P (\S^{d-1})$ be a global minimizer of $I_f$. Assume also that $\xi \in \mathcal P (\S^{d-1})$ is a local minimizer of $I_f$ and that $\supp \xi \subset \supp \nu$. Then $\xi$ is also a global minimizer of $I_f$ over $\mathcal P (\S^{d-1})$.
\end{proposition}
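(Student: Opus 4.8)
The plan is to exploit the fact that $I_f$ is a quadratic functional on measures. Set
\[
B(\mu,\eta) = \int_{\S^{d-1}}\int_{\S^{d-1}} f(\langle x,y\rangle)\, d\mu(x)\, d\eta(y),
\]
a symmetric bilinear form (bounded, since $f\in C[-1,1]$ and $\S^{d-1}$ is compact) with $I_f(\mu) = B(\mu,\mu)$. Consider the line segment $\mu_\tau = (1-\tau)\xi + \tau\nu$ joining $\xi$ to the global minimizer $\nu$. Expanding the bilinear form gives
\[
I_f(\mu_\tau) = (1-\tau)^2 I_f(\xi) + 2\tau(1-\tau)\, B(\xi,\nu) + \tau^2 I_f(\nu),
\]
a quadratic polynomial in $\tau$ taking the value $I_f(\xi)$ at $\tau=0$. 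The definition of a local minimizer, applied with $\mu=\nu$, gives $I_f(\xi)\le I_f(\mu_\tau)$ for all sufficiently small $\tau>0$; dividing $I_f(\mu_\tau)-I_f(\xi)$ by $\tau$ and letting $\tau\to 0^+$ yields the one-sided first-order inequality $B(\xi,\nu)\ge I_f(\xi)$.

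The key step is to evaluate $B(\xi,\nu)$ using the hypothesis $\operatorname{supp}\xi\subset\operatorname{supp}\nu$. Writing $B(\xi,\nu) = \int_{\S^{d-1}} F_\nu(x)\, d\xi(x)$ in terms of the potential $F_\nu$ of $\nu$, and recalling that $\nu$ is a minimizer, Lemma~\ref{lem:const_pot} gives $F_\nu\equiv I_f(\nu)$ on $\operatorname{supp}\nu$. Since $\xi$ is carried by $\operatorname{supp}\nu$, this forces $B(\xi,\nu)=I_f(\nu)$. Combining with the first-order inequality, $I_f(\nu) = B(\xi,\nu)\ge I_f(\xi)$; since $\nu$ is a global minimizer we also have $I_f(\xi)\ge I_f(\nu)$, hence $I_f(\xi)=I_f(\nu)$ and $\xi$ is a global minimizer as well.

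I do not expect a serious obstacle here beyond organizing the three ingredients correctly: (a) the quadratic structure of $I_f$, which turns local minimality into a single first-order inequality in the direction of $\nu$; (b) Lemma~\ref{lem:const_pot}, which pins down the potential of $\nu$ on its support; and (c) the support containment, which transfers this information to $\xi$. The only point needing a little care is that the directional derivative at $\tau=0$ need only be one-sided, which is precisely what the stated notion of local minimizer provides. I would then record the intended application: when $f$ is positive definite on $\S^{d-1}$ up to an additive constant, part~(iv) of Proposition~\ref{p:pd} makes $\sigma$ a global minimizer of $I_f$, and every $\xi\in\mathcal P(\S^{d-1})$ trivially satisfies $\operatorname{supp}\xi\subset\operatorname{supp}\sigma=\S^{d-1}$; thus every local minimizer of $I_f$ is global, covering in particular the $p$-frame energy for $p\in 2\N$.
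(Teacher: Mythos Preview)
Your proof is correct and follows essentially the same approach as the paper: both expand $I_f(\mu_\tau)$ quadratically along the segment from $\xi$ to $\nu$, use Lemma~\ref{lem:const_pot} together with $\supp\xi\subset\supp\nu$ to evaluate the cross term as $I_f(\nu)$, and then invoke local minimality. The only cosmetic difference is that the paper argues by contradiction (assuming $I_f(\xi)>I_f(\nu)$ and showing $I_f(\mu_\tau)<I_f(\xi)$ for small $\tau$), whereas you extract the one-sided first-order inequality $B(\xi,\nu)\ge I_f(\xi)$ directly; the content is the same.
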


If the function $f$ is positive definite (modulo a constant term) on the sphere $\S^{d-1}$, then the uniform measure $\sigma$ minimizes $I_f$ according to part  \eqref{part:energy} of Proposition \ref{p:pd}, hence one can take $\nu = \sigma $ in the lemma above.  Since $\sigma$ is supported on the whole sphere, this immediately leads to non-existence of local minimizers  which are not global:

\begin{corollary}\label{cor:locglob}
Let $ f\in C[-1,1]$ be positive definite on $\S^{d-1}$ (up to an additive constant)  and let $\xi$ be a local minimizer of $I_f$. Then $\xi$ is necessarily a global minimizer of $I_f$, i.e. $$ I_f (\xi ) = \min_{\mu \in \mathcal P (\S^{d-1})} I_f (\mu). $$
\end{corollary}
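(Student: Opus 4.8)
\textbf{Proof proposal for Proposition \ref{p:locglob}.}

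The plan is to exploit convexity of the energy functional along the line segment in $\mathcal P(\S^{d-1})$ connecting the local minimizer $\xi$ to the global minimizer $\nu$, using the positive definiteness that Lemma \ref{lem:positive_semidef} grants us on the support. First I would apply Lemma \ref{lem:positive_semidef} to the global minimizer $\nu$: after subtracting the constant $I_f(\nu)$ from $f$ (which changes $I_f(\mu)$ by that same constant for every probability measure $\mu$ and hence changes nothing about who minimizes), we may assume $I_f(\nu) = 0$, so that $f$ is positive definite on $\supp\nu$. Since by hypothesis $\supp\xi \subset \supp\nu$, and $\mu_\tau := (1-\tau)\xi + \tau\nu$ also has support inside $\supp\nu$ for all $\tau\in[0,1]$, the signed measure $\nu - \xi$ is supported in $\supp\nu$, so by part (ii)-type reasoning (positive definiteness on $\supp\nu$ applied to this signed measure) we get $I_f(\nu - \xi) \ge 0$.

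Next I would expand the energy along the segment. By bilinearity of $I_f$,
\begin{equation}
I_f(\mu_\tau) = I_f(\xi) + 2\tau\, I_f(\xi, \nu - \xi) + \tau^2 I_f(\nu - \xi),
\end{equation}
where $I_f(\cdot,\cdot)$ denotes the associated symmetric bilinear form. The key point is that the quadratic coefficient $I_f(\nu-\xi)$ is $\ge 0$ by the previous paragraph, so $\tau \mapsto I_f(\mu_\tau)$ is a convex (upward-opening, or at worst linear) parabola on $[0,1]$. Now the local minimality of $\xi$ says $I_f(\mu_\tau) \ge I_f(\xi)$ for all small $\tau>0$, which forces the linear coefficient to be nonnegative: $I_f(\xi, \nu-\xi) \ge 0$. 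Combining, for $\tau = 1$ we get
\begin{equation}
I_f(\nu) = I_f(\mu_1) = I_f(\xi) + 2\, I_f(\xi, \nu-\xi) + I_f(\nu-\xi) \ge I_f(\xi).
\end{equation}
Since $\nu$ is a global minimizer, also $I_f(\nu) \le I_f(\xi)$, so $I_f(\xi) = I_f(\nu)$ and $\xi$ is a global minimizer.

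The step I expect to be the main (though modest) obstacle is justifying $I_f(\nu - \xi) \ge 0$ rigorously: Lemma \ref{lem:positive_semidef} as stated gives positive \emph{semi}definiteness of the kernel on $\supp\nu$ (i.e. nonnegativity of the energy of finitely supported signed measures with support in $\supp\nu$), and one must pass to the continuous signed measure $\nu - \xi$. This is a routine weak-$*$ approximation argument — approximate $\nu$ and $\xi$ by discrete measures supported on $\supp\nu$ and use continuity of $f(\langle\cdot,\cdot\rangle)$ on the compact set $\supp\nu \times \supp\nu$ — but it is the one place where care is needed. Everything else is just the bilinear expansion and the elementary observation that a convex function on $[0,1]$ which has a local minimum at $0$ takes its value at $0$ below its value at $1$. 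For Corollary \ref{cor:locglob} one simply takes $\nu = \sigma$, which is a global minimizer by Proposition \ref{p:pd}(iv) and whose support is all of $\S^{d-1}$, so the hypothesis $\supp\xi \subset \supp\sigma$ is automatic.
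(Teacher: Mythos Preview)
Your argument is correct, but it follows a different path than the paper's. The paper does not invoke Lemma~\ref{lem:positive_semidef} or convexity of the energy on $\supp\nu$ at all; instead it uses Lemma~\ref{lem:const_pot} to evaluate the \emph{cross term} exactly. Since $F_\nu \equiv I_f(\nu) = \alpha$ on $\supp\nu \supset \supp\xi$, one gets $I_f(\nu,\xi) = \int F_\nu\, d\xi = \alpha$, and hence the explicit formula
\[
I_f(\mu_\tau) = \tau^2\alpha + 2\tau(1-\tau)\alpha + (1-\tau)^2\beta = \alpha + (\beta-\alpha)(1-\tau)^2,
\]
which for $\beta>\alpha$ is strictly below $\beta$ for every $\tau\in(0,1)$, immediately contradicting local minimality at $\tau=0$. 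Your route replaces this exact computation with the structural observation $I_f(\nu-\xi)\ge 0$ (positive semidefiniteness on $\supp\nu$), which buys you convexity of $\tau\mapsto I_f(\mu_\tau)$ and then a first-order argument. The paper's approach is slightly more elementary in that it avoids the passage from discrete to continuous signed measures that you flagged; your approach is more conceptual and would generalize to any situation where the energy is known to be convex on the relevant face of $\mathcal P$, even without an explicit constant-potential identity. Both deduce the corollary identically, by taking $\nu=\sigma$.
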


\begin{proof}[Proof of Proposition \ref{p:locglob}] Let $\nu$ be a global minimizer, that is $$I_{f}(\nu)=\inf\limits_{\mu\in\mathcal{P}(\S^{d-1})} I_{f}(\mu) = \alpha.$$ 
According to Lemma \ref{lem:const_pot}, the potential of $\nu$ satisfies
\begin{equation}\label{eq:pot_glob}
  F_\nu(x)  = \int_{\mathbb{S}^{d-1}} f( \langle x, y \rangle) d \nu(y)  =  I_f (\nu) = \alpha\,\,\, \quad \textup{for all $x \in\supp \nu$.}
  \end{equation}
Suppose, by contradiction, that $\xi$  satisfies $\alpha=I_{f}(\nu)<I_{f}(\xi)$. Since $\xi$ is a local minimizer, setting $\mu_\tau = \tau\nu+(1-\tau)\xi$, for sufficiently small $0< \tau<1$, we have
\begin{equation}\label{eq:minmulambda}
I_{f}(\mu_\tau )\geq I_{f}(\xi).
\end{equation}
Setting $I_{f}(\xi)=\beta >\alpha$ and  using  \eqref{eq:pot_glob}, 
a quick calculation shows that
\begin{align*}
I_f (\xi) & \le  I_f (\mu_\tau) =  \tau^2  I_f (\nu) + (1-\tau)^2 I_f (\xi) + 2\tau(1-\tau) \int_{\S^{d-1}}F_\nu (x) \, d\xi (x)  \\
&=\tau^2\alpha+(1-\tau)^2\beta +2\tau(1-\tau)\alpha.
\end{align*}
Thus, $\tau^2\alpha+2\tau(1-\tau)\alpha+(1-\tau)^2\beta\geq \beta$. 
However $$ \tau^2\alpha+2\tau(1-\tau)\alpha+(1-\tau)^2\beta < \beta \big(  \tau^2+2\tau(1-\tau)+(1-\tau)^2 \big) = \beta,$$ which is a contradiction. 
\end{proof}

Corollary \ref{cor:locglob} obviously applies to the $p$-frame energies  when $p= 2k$ is an even integer.    As discussed in the introduction,  $\sigma$  minimizes $I_f$, since $f(t) = t^{2k}$ is positive definite. Thus, all the local minimizers of the $2k$-frame energy are necessarily global. A somewhat  similar effect for $p=2$  has been observed  in \cite{benedetto2003finite} for discrete energies: it was proved that any finite configuration locally minimizing the $N$-point frame energy is also a global minimizer, and therefore it is a tight frame, whenever $N\geq d$.

\section{Acknowledgments}\label{sec:acknow}

We express our gratitude to the  following organizations that  hosted subsets of
the authors during the work on this paper: AIM, ICERM, INI, CIEM, Georgia Tech.
The first author was supported by the  grant DMS-1665007, the third author was
supported by the Graduate Fellowship 00039202,  and the fourth author was
supported in part by the grant DMS-1600693, all from the US National Science
Foundation. This paper is based upon work supported by the National Science Foundation
grant DMS-1439786 while D.B., A.G., and O.V.\ were in residence at the Institute
for Computational and Experimental Research in Mathematics in Providence, RI,
during the ``Point Configurations in Geometry, Physics and Computer Science''
program. This work is also supported in part by EPSRC grant no EP/K032208/1.  

We thank Henry Cohn, David de Laat, Axel Flinth, Michael Lacey, Svitlana Mayboroda, Bruno Poggi, Alexander Reznikov, Daniela Schiefeneder, Alexander Shapiro, and Yao Yao for helpful discussions.

\end{document}